\newtheorem{theorem}{Theorem}[section]
\newtheorem{lemma}[theorem]{Lemma}
\newtheorem{problem}[theorem]{Problem}
\newtheorem{proposition}[theorem]{Proposition}
\newtheorem{corollary}[theorem]{Corollary}
\newtheorem{remark}[theorem]{Remark}
\theoremstyle{definition}
\newcommand{\hr} {\hookrightarrow}
\numberwithin{equation}{section}
\begin{document}
       \title[   classification  of  the   spaces of compact operators      ] {A complete  classification  of  the   spaces of compact operators on  $C([1,\alpha], l_{p})$  spaces,  $1<p< \infty$}
			 \author{Dale E.  Alspach}
      \author{El\'oi Medina Galego}
      \subjclass[2010]{Primary 46B03; Secondary 46B25}
      \keywords {$C([1, \alpha])$  separable spaces,  $l_{p}$ spaces, spaces of compact
      operators, isomorphic classifications.}

  \begin{abstract}   We  complete  the classification, up to isomorphism,
of  the spaces of compact operators on  $C([1, \gamma], l_{p})$ spaces,
$1<p< \infty$.  
In order to do this,
we classify, up to isomorphism,  the spaces of compact operators
${\mathcal K}(E, F)$,  where $E= C([1, \lambda], l_{p})$ and   $F=C([1,
\xi], l_q)$  for  arbitrary ordinals    $\lambda$ and $\xi$ and    $1<
p \leq q< \infty$.

More precisely, we prove  
that it is relatively consistent with 
ZFC that  for any infinite  ordinals $\lambda$, $\mu$, $\xi$ and $\eta$ the following statements are  equivalent:

\begin{enumerate}
\item[(a)] ${\mathcal K}(C([1, \lambda], l_{p}), C([1, \xi], l_{q})) $ is
isomorphic to  ${\mathcal K}(C([1, \mu], l_{p}), C([1, \eta], l_{q})) .$

\item  [(b)] $\lambda$ and $\mu$ have the same cardinality   and
$C([1,\xi])$ is isomorphic to $C([1, \eta])$ or there exists an
uncountable regular  ordinal  $\alpha$  and $ 1 \leq m, n  < \omega$
such that $C([1, \xi])$ is isomorphic to $C([1, \alpha m])$ and $C([1,
\eta])$ is isomorphic to $C([1, \alpha n])$.  \end{enumerate}

Moreover, in ZFC,  if $\lambda$ and $\mu$ are finite ordinals and $\xi$ and
$\eta$ are infinite ordinals then the
statements (a) and (b') are equivalent.
\begin{enumerate}
\item  [(b')] $C([1,\xi])$ is isomorphic to $C([1, \eta])$ or there exists
an
uncountable regular  ordinal  $\alpha$  and $ 1 \leq m, n  \le \omega$
such that $C([1, \xi])$ is isomorphic to $C([1, \alpha m])$ and $C([1,
\eta])$ is isomorphic to $C([1, \alpha n])$. 
\end{enumerate}

    \end{abstract}
    
    \maketitle
    

    
    
    \par

    \section{Introduction}

We use standard  set theory and   Banach space theory  terminology and notions  as can be found 
in \cite{J} and  \cite{JL}, respectively.  Let $K$ be  
a compact Hausdorff space and X a  Banach space.  The space $C(K,X)$ 
denotes the Banach space of all 
continuous $X$-valued functions defined on $K$ equipped with the supremum
norm, i.e., $\|f\|=\sup_{k\in K} \|f(k)\|_X.$ We will write $C([1,
\alpha], X)=C(\alpha, X)$ when $K$ is the interval of ordinals $[1,
\alpha]=\{\xi: 1 \leq \xi \leq \alpha\}$ endowed with the order
topology. These spaces will be denoted by $C(K)$ and $C(\alpha)$,
respectively, in the case  $X=\mathbb R$.

For a set $\Gamma$,  $c_{0}(\Gamma, X)$ is the Banach space of all
$X$-valued functions $f$ on $\Gamma$  with the property that
for every $\epsilon>0$, the set $\{\gamma \in \Gamma:    \   \|f(\gamma)\|
\geq \epsilon \}$ is finite, and equipped with the sup norm.  This space
will be denoted by $c_{0}(\Gamma) $ in the case  $X=\mathbb R$ and  by
$c_{0}$  when, in addition,  the cardinality of  $\Gamma$ (denoted by
$|\Gamma|$) is $\aleph_{0}$.

Given 
Banach spaces $X$ and $Y$, ${\mathcal K}(X, Y)$ denotes the Banach space of 
compact operators from $X$ to $Y$. As usual, when $X=Y$, this space will be denoted by   ${\mathcal K}(X)$.    We write $X \sim Y$ when  
$X$ and $Y$ are isomorphic and $X \hookrightarrow Y$ when $Y$ contains  a  copy of $X$, that is, a subspace  isomorphic to $X$.

In this paper, we are mainly interested in   completing   the isomorphic 
classification of  the  spaces  ${\mathcal K}(C(\lambda, l_{p}), C(\xi, 
l_{q}))$  obtained partially   in \cite{G3}, \cite{G4} and  \cite{G5},  where $\lambda$ and $\xi$ are  arbitrary ordinals  and $l_p$ and $l_{q}$ are the classical  Banach spaces  of  scalar sequences  with $1 < p, q< \infty$.

For $p>q$, an isomorphic 
classification of these spaces was accomplished   in \cite[Remark 4.1.3]{G3} for 
$\lambda< \omega$ and $\xi \geq \omega$, and in \cite[Remark 1.7]{G4} for $\lambda \geq 
\omega$ and $\xi \geq \omega$. We pointed out that,  in both these cases,
it was crucial the following geometric  property  of the spaces of compact
operators from $l_{p}$ to $l_{q}$  \cite[ Main Theorem]{S2}.
$$c_{0} \not \hookrightarrow {\mathcal K}(l_
{p}, l_{q}). \eqno(1) $$
Thus, in the present work,  we turn our attention to the case $1<
p \leq q< \infty$. In this case the situation in quite
different. Indeed, in contrast to (1), by \cite[Theorem 5.3]{AG}, 
${\mathcal K}(l_ {p}, l_{q})$ is isomorphic to its $c_{0}$-sum
or, what is the same, that $${\mathcal K}(l_ {p}, l_{q}) \sim {\mathcal
K}(l_{p}, c_{0}(l_{q})). \eqno(2)$$
Nevertheless,  in   \cite{G5}   it was shown that the  following
cancellation law holds.
$${\mathcal K}(C(\lambda, l_{p}), C( \xi, l_{q})) \sim {\mathcal K}(C(
\mu, l_{p}), C(\eta, l_{q})) \Longleftrightarrow C(\xi) \sim C(\eta),$$
whenever $\lambda$, $\mu$, $\xi$ and $\eta$ are  infinite countable
ordinals or $\lambda \mu < \omega$ .

The main goal of this paper is  to extend  the above cancellation law by giving a complete isomorphic classification  of the spaces  ${\mathcal K}(C(\lambda, l_{p}), C(\xi, l_{q}))$, where  $\lambda$ and $\xi$ are arbitrary ordinals.

To do this,  first in section \ref{Kc0}  we state a  new  geometric property of the spaces of compact operators involving the spaces  $l_{p}$,  $l_{q}$ and  $c_{0}$.  Namely.
$${\mathcal K}(c_{0})    \not  \hookrightarrow {\mathcal K}(c_{0}(l_
{p}), l_{q})). $$
Moreover, in order to use the isomorphic classification of  some  $C(\xi,
X)$  spaces obtained in \cite{G21}, in section \ref{c0gamma} we prove a stability property of certain spaces of compact operators containing copies of  some $c_{0}(\Gamma)$ spaces. Specifically, we  show that  if $X$ is a Banach space and $\Gamma$ is a set of cardinality $\aleph_{1}$, then
$$c_{0}(\Gamma) \hookrightarrow {\mathcal K}(l_{p}, X)   \Longleftrightarrow                c_{0}(\Gamma) \hookrightarrow X.$$
In section \ref{classification}  we  present our main result  (Theorem \ref{iso}). It provides a necessary and sufficient  condition for two  spaces  ${\mathcal K}(C (\lambda,   l_{p}), C(\xi, l_{q}))$   to be isomorphic, whenever  $\lambda$ and $\xi$ are infinite and  the cardinality of $\lambda$   is strictly less than the least real-valued measurable cardinal $m_{r}$.

We recall that a cardinal number $m$ is a real-valued measurable cardinal if there exists a non-trivial  real-valued measure defined on all subsets of a set of cardinal $m$ for which points have measure $0$ \cite[ page 560]{E}.
	
Theorem \ref{iso}   is  a complete isomorphic classification of the    ${\mathcal K}(C(\lambda, l_{p}), C(\xi, l_{q}))$ spaces, where   $\lambda$ and $\xi$  are infinite. Indeed, it is well known that the existence of real-valued  measurable cardinals 
cannot 
be
proved  in ZFC \cite[pages 106 and 108]{KM}. On the other hand, it is relatively
consistent
with ZFC that real-valued measurable  cardinals do not exist \cite[Theorem 4.14, page
972]{GP}.
So it is relatively consistent with ZFC that  we obtain  a
complete isomorphic classification of the spaces ${\mathcal K}(C(\lambda, l_{p}), 
C(\xi, l_{q}))$, where $\lambda$ and  $\xi$ infinite.

Finally, in Theorem \ref{t2} we consider the remaining case  $\lambda$ is finite.  Then, by using (2) we complete the proof of the isomorphic classification  mentioned in the abstract.

\section{Preliminaries}

In this section we recall the isomorphic classification of the $C(\xi)$ spaces,
the generalization proved in \cite{G21}, and some properties of the spaces of
compact operators.

From now on  $|\xi|$  denotes the cardinality of the ordinal $\xi$. Recall
that an ordinal $\alpha$ is said to be regular if its cofinality is itself.
Otherwise $\alpha$ is a singular ordinal.   The smallest two infinite
regular  ordinals are $\omega$ and $\omega_{1}$.  We denote by
$X {\hat{\hat\otimes}} Y$ the injective tensor product of the Banach
spaces $X$ and $Y$.  

Bessaga and Pelczynski \cite{BP} showed that for infinite countable
ordinals, $\xi  \leq \eta$, $C(\xi)$ is isomorphic to  $C(\eta)$ if and only if
$\eta < \xi^\omega.$ In 1976 the classification was extended to nondenumerable ordinals $\xi$ and $\eta$,
\cite{GO} and \cite{Kis}, independently. The general criterion is the same
as that obtained by Bessaga and Pelczynski except in the case that   the initial ordinal $\alpha$ of cardinality $|\xi|=|\eta|$ 
is a nondenumerable, regular ordinal, and $\alpha\leq \xi \leq   \eta \leq \alpha^2.$ For
this case,  equality of cardinalities of the ordinal quotients, $\xi',\, \eta',$
where $\xi=\alpha \xi'+\delta,\, \eta=\alpha \eta'+\gamma$, and $\delta,\,
\gamma<\alpha$ is the requirement for isomorphism of $C(\xi)$ and
$C(\eta).$

For the spaces $C(\xi,X)$ and $C(\eta,X)$, where $X$ is an infinite dimensional Banach
space some additional difficulties arise in determining the isomorphic
classification. Indeed,  since  that  $C(\xi,X)$ is isomorphic to $C(\xi) {\hat{\hat\otimes}}
X$, it follows that $$C(\xi)\sim C(\eta)    \Longrightarrow   C(\xi,X)\sim C(\eta,X).$$
Moreover,  $$X\sim X \oplus X    \Longrightarrow     C(\alpha n,X)\sim C(\alpha,X)$$
 for all
positive finite ordinals $n$. Thus for a nondenumerable regular ordinal
$\alpha$ the classification is less fine than for the spaces of real-valued
functions. However, for some Banach spaces it is possible to determine
the isomorphic classification of  the spaces $C(\xi,X)$.  

To state this result recall that a Banach space $X$ is said to have the Mazur Property,
MP in short, if every element of $X^{**}$ which is sequentially weak*
continuous is weak* continuous and thus is an element of $X$. Such
spaces were investigated in \cite{E}  and   \cite{Ka} and sometimes
they are also called d-complete \cite{L} or $\mu$B-spaces \cite{W}. 
In \cite{Ka} it was shown (Theorem 4.1)
that $C(\xi)$ for $\xi<\omega_1$, $\left(\sum_{\gamma\in \Gamma}
X_\gamma\right )_{l_1},$ where $|\Gamma|$ is non-measurable and $X_\gamma$
has MP for all $\gamma,$ (Theorem 3.1), and $X{\hat{\hat\otimes}}Y$,
where $X$ has MP and $Y$ is separable, (Corollary 5.2.3) have MP.

Let $\mathcal F$ be the class of Banach  spaces containing no copy of
$c_{0}(\Gamma)$, where $|\Gamma|= \aleph_{1}$, and  having the Mazur
property. For this class
the main theorem of \cite{G21} gives a classification of the spaces
$C(\xi,X).$ 

\begin{theorem} \label {Bu}Let $X \in \mathcal F$ and 
$\alpha$  an initial ordinal and $\xi \leq \eta$ 
infinite  ordinals. Then 
\begin{enumerate}
\item If $C(\xi, X)$ is isomorphic to  $C(\eta, X)$ then
$|{\xi}| = |{\eta}|.$

\item Suppose  that $|\xi|=|\eta|=|\alpha|$ and assume that   
$\alpha$ is a singular ordinal,  or  
$\alpha$ is a nondenumerable regular ordinal with $\alpha^2 \leq
\xi$.  
Then $$C(\xi, X) \sim C(\eta, X) \Longleftrightarrow \eta
< \xi^{\omega}.$$
\item \label{regular_ord}
Suppose that $\alpha$ is an uncountable  regular ordinal,
$\xi, \eta \in [\alpha, \alpha^{2}]$ and let $\xi', \eta', \gamma$
and  $\delta$ be ordinals
such that $\xi=\alpha \xi' + \gamma$, $\eta=\alpha \eta' + \delta$, 
$\xi', \eta'\leq \alpha$ and  $\gamma, \delta <\alpha$. Then 
$$C(\xi, X) \sim C(\eta, X),$$    
if and only if one of the following
statements holds:
\begin{enumerate}
\item[(a)]\label{finite_quotient} $|{\xi'}| \ |{\eta'}| \leq
\aleph_{0}$ and $c_{0} (J, X)$ is isomorphic to  
$c_{0}(I, X)$, where $J$ and $I$ are sets with $|J|=|{\xi'}|$ and $|I|
=|{\eta'}|$.
\item[(b)] $|{\xi'}| = |{\eta'}| > \aleph_{0}$.
\end{enumerate}
\item  Suppose that $\alpha$ is an uncountable  regular ordinal and  
$\alpha \leq \xi < \alpha^2 \leq \eta$. Then
$$C(\xi, X)  \not \sim C(\eta, X).$$ 
\end{enumerate}
\end{theorem}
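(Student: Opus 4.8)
The plan is to reduce every $C(\xi,X)$, by ordinal arithmetic and Cantor--Bendixson (CB) analysis of $[1,\xi]$, to a small list of canonical spaces, and then to lift the scalar classification recorded above using $C(\xi,X)\sim C(\xi){\hat{\hat\otimes}}X$. The structural facts I would set up first are: (i) for a limit ordinal $\beta$ and $n<\omega$, $C([1,\beta\cdot n],X)\sim C([1,\beta],X)^{n}$; (ii) for a limit ordinal $\beta$ and an ordinal $\mu$, the map $\gamma\mapsto(\text{top of the }\beta\text{-block containing }\gamma)$ is a continuous retraction of $[1,\beta\mu]$ onto $[1,\beta\mu]^{(\beta)}\cong[1,\mu]$, so $C([1,\beta\mu],X)\sim C([1,\mu],X)\oplus c_{0}(I,C_{0}([1,\beta),X))$ with $|I|=|\mu|$, and $C_{0}([1,\beta),X)$ differs from $C([1,\beta],X)$ only by a complemented hyperplane; (iii) for $\mu\le\beta$, $\gamma\mapsto\min(\gamma,\mu)$ retracts $[1,\beta]$ onto $[1,\mu]$, so $C([1,\mu],X)$ is complemented in $C([1,\beta],X)$; and (iv) tail absorption $C([1,\beta\mu+\gamma],X)\sim C([1,\beta\mu],X)$ for $\gamma<\beta$. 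Iterating (i)--(iv) normalizes every $C(\xi,X)$.

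\emph{Sufficiency} is then routine. In (2), the hypotheses on $\alpha$ place $\xi\le\eta$ outside the single exceptional range of the scalar theorem, so $\eta<\xi^{\omega}$ gives $C(\xi)\sim C(\eta)$ and hence $C(\xi,X)\sim C(\eta,X)$. In (3), after tail absorption, (ii)--(iii) give $C(\alpha\xi',X)\sim c_{0}(|\xi'|,C([1,\alpha],X))$ when $|\xi'|>\aleph_{0}$ (the $C([1,\xi'],X)$ summand, being complemented in $C([1,\alpha],X)$, is absorbed into one coordinate of the $c_{0}$-sum by a Pelczynski decomposition), which settles (b); while in (a) a tensor computation turns $c_{0}(J,X)\sim c_{0}(I,X)$ directly into $C(\xi,X)\sim C(\eta,X)$ (e.g.\ $X^{m}\sim X^{n}$ gives $C(\alpha\cdot m,X)=C([1,\alpha]){\hat{\hat\otimes}}X^{m}\sim C([1,\alpha]){\hat{\hat\otimes}}X^{n}=C(\alpha\cdot n,X)$).

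The content is in \emph{necessity}. For (1), the cardinal $|\xi|$ is the invariant $\iota(Y):=\sup\{\kappa:c_{0}(\kappa)\hr Y\}$: one has $c_{0}(|\xi|)\hr C(\xi)\hr C(\xi,X)$ (take the closed span of the indicators of the $|\xi|$ clopen successor-singletons below the initial ordinal of $|\xi|$, and note that continuity forces the $c_{0}$-condition), so $\iota(C(\xi,X))\ge|\xi|$; conversely, if $c_{0}(\kappa)\hr C(\xi){\hat{\hat\otimes}}X$ with $\kappa\ge\aleph_{1}$, then, $X\in\mathcal F$ containing no $c_{0}(\aleph_{1})$ and hence no $c_{0}(\kappa)$, a $c_{0}(\Gamma)$-stability lemma for injective tensor products (the same mechanism as in the $\mathcal K(l_{p},\cdot)$ stability established elsewhere in the paper) forces $c_{0}(\kappa)\hr C(\xi)$, whence $\kappa\le|\xi|$ by a density-character count; a transfinite induction on $\kappa$ finishes. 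For the remaining direction of (2) and for (4), one works inside a fixed cardinality and separates the scalar isomorphism classes of $\xi$ by a Szlenk-type derived-length invariant of $C(\xi,X)$ --- essentially the length of the canonical filtration by the complemented subspaces $C([1,\xi]^{(\rho)}){\hat{\hat\otimes}}X$, which detects both $\eta\ge\xi^{\omega}$ and the threshold $\alpha^{2}$. The delicate point here, and where the Mazur property of $X$ enters, is to make this filtration intrinsic to the Banach-space structure of $C(\xi,X)$ for a \emph{general} $X\in\mathcal F$ --- not just for $X$ of small Szlenk index --- so that it is not obscured by the tensor factor.

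Finally, necessity in (3) is the heart. From $C(\xi,X)\sim C(\eta,X)$ with $\xi,\eta\in[\alpha,\alpha^{2}]$ one extracts $|\xi'|=|\eta'|$ by combining the invariant of (1) with the derived-length invariant of (2)/(4); this is itself an induction, since distinguishing $c_{0}(\kappa_{1},C([1,\alpha],X))$ from $c_{0}(\kappa_{2},C([1,\alpha],X))$ for $\kappa_{1}\ne\kappa_{2}$ is again an instance of (3) with a quotient of smaller cardinality. If $|\xi'|=|\eta'|>\aleph_{0}$ then (b) holds; if it is $\le\aleph_{0}$ one must cancel the common bulk to arrive at $c_{0}(J,X)\sim c_{0}(I,X)$, which is (a). I expect this cancellation to be the main obstacle: it amounts to showing that $C([1,\alpha],X)^{m}\sim C([1,\alpha],X)^{n}$ already forces $X^{m}\sim X^{n}$ (and the analogue with $c_{0}(\aleph_{0},X)$ in place of a power of $X$) --- a genuine cancellation law for which the rigidity supplied by the Mazur property of $X$, and of $C(\mu,X)$ for $\mu<\omega_{1}$ (via \cite{Ka}), seems to be exactly what is needed; the intrinsicness of the filtration of (2)/(4) is the companion technical core.
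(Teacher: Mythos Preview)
The paper does not prove Theorem~\ref{Bu}: it is stated in the Preliminaries section as ``the main theorem of \cite{G21}'' and is quoted there without proof, to be used as a black box in the classification results of Sections~\ref{Kc0}--\ref{classification}. There is therefore no proof in the present paper against which to compare your proposal.

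Regarding the proposal on its own terms: the sufficiency direction and the normalization machinery (i)--(iv) are reasonable, and the invariant $\iota(Y)=\sup\{\kappa:c_{0}(\kappa)\hookrightarrow Y\}$ for part~(1) is the right idea. But the necessity arguments for (2), (3), and (4) are only programmatic. You invoke ``a Szlenk-type derived-length invariant'' without constructing it or explaining precisely how the Mazur property of $X$ renders it intrinsic to $C(\xi,X)$; and you explicitly flag the cancellation step in (3)(a) --- passing from $C(\alpha\xi',X)\sim C(\alpha\eta',X)$ back to $c_{0}(J,X)\sim c_{0}(I,X)$ --- as ``the main obstacle'' without resolving it. Those two points are exactly the substantive content of the result in \cite{G21}; what you have written is a plausible roadmap rather than a proof. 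If you want to supply an independent proof, the concrete tasks are to (i) define the derived-length invariant rigorously and prove its isomorphic invariance for $X\in\mathcal F$ using MP, and (ii) establish the cancellation law $C(\alpha,X)^{m}\sim C(\alpha,X)^{n}\Rightarrow X^{m}\sim X^{n}$ (and its $c_{0}$-sum analogue) for uncountable regular $\alpha$.
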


\begin{remark} \label{rrr}
Notice that for a fixed $X \in \mathcal F$,
the isomorphic classification of the spaces $C(\xi,X)$ 
is the same as that of spaces $C(\xi)$ except in the case
(\ref{regular_ord}) of the theorem. As noted above this
case can occur when $X \sim X\oplus X.$
\end{remark}

We are concerned with spaces of compact operators $\mathcal
K(C(\lambda,Y),C(\xi,Z))$, where $Y$ and $Z$ are Banach spaces 
which are isomorphic their squares, i.e, $Y \sim Y\oplus Y$, and such that
their duals have the approximation property,
and $\lambda$ and $\xi$ are ordinals.

 Because
$C(\lambda, Z)$  has the approximation property if $Z$ has the
approximation property, by \cite[Proposition 5.3]{DF} we see that   $$\mathcal
K(C(\lambda,Y),C(\xi,Z)) \sim C(\lambda,Y)^* {\hat{\hat \otimes}} C(\xi,Z).$$
Moreover, notice that $C(\lambda,Y)^{* }$ is isomorphic to  $l_1(\Gamma,Y^*),$ where $\Gamma$ is a set
with cardinality $|\lambda|.$ Thus the isomorphism class of the spaces  $\mathcal
K(C(\lambda,Y),C(\xi,Z))$ can be represented by $\mathcal
K(C(\lambda_0,Y),C(\xi,Z))$ where $\lambda_0$ is the smallest ordinal with
$|\lambda_0|=|\lambda|$ or if $\lambda$ is finite, then $\lambda_0=1$.

In \cite{AG} it was shown that $$C(\omega,
\mathcal K(l_p,l_q)) \sim \mathcal K(l_p,l_q),$$ for $1< p\le q <\infty.$
Because $$C(\omega,
\mathcal K(l_p,l_q)) \sim \mathcal K(l_p,C(\omega,l_q)),$$ thus in the case
$Y=l_p$ and $Z=l_q$ there is the possibility of additional collapsing of
the isomorphism classes.
Indeed, if $\alpha$ is either a finite or uncountable regular ordinal and
$\xi \le \omega$. then
the spaces $C(\alpha \xi)$ are isomorphically distinct. However, if
$\lambda <\omega,$ then 
$$\mathcal K(C(\lambda,l_p),C(\alpha \xi,l_q)) \sim
l_{p'}{\hat{\hat{\otimes}}} C(\alpha \xi) {\hat{\hat{\otimes}}} l_q  \sim C(\alpha, C(\xi, l_{p'}{\hat{\hat{\otimes}}} l_q))  \sim
C(\alpha,
l_{p'}{\hat{\hat{\otimes}}} l_q), $$
and therefore
$$\mathcal K(C(\lambda,l_p),C(\alpha \xi,l_q)) \sim \mathcal K(l_p,C(\alpha,l_q)).$$

The next proposition summarizes these remarks and the consequences
of Theorem \ref{Bu}. Below, if $\gamma$ is an ordinal, $\gamma_0=1$ if
$\gamma<\omega$ and otherwise $\gamma_0$ is the smallest ordinal with the
same cardinality as $\gamma.$ 

\begin{proposition}\label{canonical_ord} Suppose that $Y$ and $Z$ are
Banach spaces in class $\mathcal
F$, have the
approximation property, are isomorphic to their squares, and $\mathcal
K(Y,Z)$  is isomorphic to  $C(\omega,\mathcal
K(Y,Z)).$ Then $$\mathcal
K(C(\lambda,Y),C(\xi,Z)) \sim \mathcal
K(C(\lambda_0,Y),C(\psi(\lambda,\xi),Z)),$$ where
\begin{enumerate}
\item{} if $\lambda \ge \omega,$ $\xi_0$ is an uncountable regular ordinal, $\xi \le
\xi_0^2$, and $\xi=\xi_0 \xi' +\delta$, $\delta<\xi_0$ then $\psi(\lambda,\xi)=\xi_0\xi'_0$
if $\xi'\ge \omega$ and $\psi(\lambda,\xi)=\xi_0$ if $\xi'<\omega$.
\item{} if $\lambda <\omega,$ $\xi_0$ is an uncountable regular ordinal,
$\xi \le
\xi_0^2$, and $\xi=\xi_0 \xi' +\delta$, $\delta<\xi_0$ then
$\psi(\lambda,\xi)=\xi_0\xi'_0$
if $\xi'> \omega$ and $\psi(\lambda,\xi)=\xi_0$ if $\xi'\le \omega$.

\item{} $\xi_0$ is an uncountable regular ordinal, and $\xi_0^2<\xi$, then
$\psi(\lambda,\xi)=\max\{ \xi_0^2,\gamma\}$ where $\gamma$ is the smallest  
ordinal such that $\gamma^\omega>\xi$ 
\item{} if $\xi_0<\omega$ or $\lambda<\omega$ and $\xi<\omega^\omega$, $\psi(\lambda,\xi)=1$
\item{} if $\xi_0=\omega$ and $\lambda \ge \omega$ or
$\xi_0$ is an infinite singular ordinal,
$\psi(\lambda,\xi)$ is the smallest
ordinal $\psi$ such that $\psi^\omega>\xi.$
\end{enumerate}
\end{proposition}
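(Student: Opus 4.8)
The plan is to strip the two variables off one at a time inside a single injective tensor product and then extract the canonical ordinal from Theorem~\ref{Bu}.

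First I would linearize and dispose of $\lambda$. As recorded before the statement, $\mathcal{K}(C(\lambda,Y),C(\xi,Z))\sim C(\lambda,Y)^{*}\hat{\hat\otimes}C(\xi,Z)$ by \cite[Proposition 5.3]{DF}, and $C(\lambda,Y)^{*}\sim l_{1}(\Gamma,Y^{*})$ with $|\Gamma|=|\lambda|$; combining this with $C(\xi,Z)\sim C(\xi)\hat{\hat\otimes}Z$, $C(\xi)\hat{\hat\otimes}W\sim C(\xi,W)$ and the associativity of $\hat{\hat\otimes}$ gives
$$\mathcal{K}(C(\lambda,Y),C(\xi,Z))\sim C(\xi,W),\qquad W:=\mathcal{K}(C(\lambda_{0},Y),Z).$$
Thus the space depends on $\lambda$ only through $|\lambda|$, and applying the same identity with $\xi$ replaced by $\psi(\lambda,\xi)$ the proposition reduces to proving $C(\xi,W)\sim C(\psi(\lambda,\xi),W)$. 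Two features of $W$ will drive this: $W\sim W\oplus W$, because $\hat{\hat\otimes}$ distributes over $\oplus$ and $Z\sim Z\oplus Z$; and, when $\lambda<\omega$, $\lambda_{0}=1$, so $W=\mathcal{K}(Y,Z)\sim C(\omega,\mathcal{K}(Y,Z))\sim c_{0}(W)$.

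Next I would verify $W\in\mathcal F$, which is what licenses Theorem~\ref{Bu} for $C(\xi,W)$. From $W\sim l_{1}(\Gamma,Y^{*})\hat{\hat\otimes}Z$, the Mazur property follows by applying \cite[Theorem 3.1]{Ka} to the $l_{1}$-sum (once $|\Gamma|$ is non-measurable and $Y^{*}$ has MP, which is automatic for the reflexive $Y=l_{p}$) and then \cite[Corollary 5.2.3]{Ka} to tensor with the separable space $Z$; that $W$ contains no copy of $c_{0}(\Gamma')$ with $|\Gamma'|=\aleph_{1}$ is exactly the stability property proved in Sections~\ref{Kc0} and~\ref{c0gamma}. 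I expect this step to carry the real content of the argument, since it is the only place where the geometry of $\mathcal{K}(l_{p},l_{q})$, rather than formal manipulation, is used.

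Finally, with $W\in\mathcal F$ and $W\sim W\oplus W$ in hand, I would reduce $\xi$ by the classification. If $\xi$ is finite, $C(\xi,W)$ is a finite $\oplus$-sum of copies of $W$, so $C(\xi,W)\sim W=C(1,W)$ and $\psi=1$. If $\xi$ is countably infinite, $C(\xi,W)=C(\xi)\hat{\hat\otimes}W$, so by Bessaga--Pelczynski $C(\xi,W)\sim C(\psi,W)$ for the least ordinal $\psi$ with $\psi^{\omega}>\xi$, which is the value recorded in case~(5) when $\xi_{0}=\omega$; if moreover $\lambda<\omega$, then $C(\xi)\sim c_{0}$ for every $\omega\le\xi<\omega^{\omega}$, so $C(\xi,W)\sim c_{0}(W)\sim W$, giving $\psi=1$ as in case~(4). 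If $\xi$ is uncountable I would apply Theorem~\ref{Bu} to $C(\xi,W)$: parts~(2) and~(4) handle $\xi_{0}$ infinite singular and $\xi_{0}$ uncountable regular with $\xi>\xi_{0}^{2}$, giving the least $\psi$ with $\psi^{\omega}>\xi$, resp.\ $\max\{\xi_{0}^{2},\gamma\}$ (after the routine check of the ordinal inequalities in part~(2)); and part~\ref{regular_ord} handles $\xi_{0}$ uncountable regular with $\xi\le\xi_{0}^{2}$. Writing $\xi=\xi_{0}\xi'+\delta$ with $\delta<\xi_{0}$ and noting that $\xi_{0}\xi'$ and $\xi_{0}\xi'+\delta$ share the same $\xi_{0}$-quotient, part~\ref{regular_ord} gives $C(\xi,W)\sim C(\xi_{0},W)$ when $\xi'$ is finite (via $W\sim W\oplus W$) and $C(\xi,W)\sim C(\xi_{0}\xi'_{0},W)$ when $|\xi'|>\aleph_{0}$ (reindexing $c_{0}(\xi',W)\sim c_{0}(\xi'_{0},W)$), while for $\xi'=\omega$ the representative is $\xi_{0}\omega$ in general but drops to $\xi_{0}$ precisely when $c_{0}(W)\sim W$, i.e.\ when $\lambda<\omega$; this is the sole discrepancy between (1) and (2). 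Collecting these computations of $\psi$ completes the proof.
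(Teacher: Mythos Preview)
Your first and third paragraphs are exactly the paper's argument: linearize via the injective tensor product so that only $|\lambda|$ matters, set $W=\mathcal K(C(\lambda_0,Y),Z)$, and then read off $\psi(\lambda,\xi)$ case by case from the scalar $C(\xi)$ classification together with $W\sim W\oplus W$ and, when $\lambda<\omega$, $W\sim c_0(W)$. The paper has no formal proof of Proposition~\ref{canonical_ord}; it is stated as a summary of the remarks preceding it and of Theorem~\ref{Bu}.

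Your second paragraph, however, misreads what is being asked. Proposition~\ref{canonical_ord} only asserts \emph{isomorphisms}, i.e.\ the ``if'' direction, and for that you never need $W\in\mathcal F$: in every case either $C(\xi)\sim C(\psi(\lambda,\xi))$ already at the scalar level, or the extra collapsing comes from $W\sim W\oplus W$ or $W\sim c_0(W)$, neither of which uses $\mathcal F$. The hypothesis $X\in\mathcal F$ in Theorem~\ref{Bu} is what drives the \emph{non-isomorphism} direction, and the paper postpones that to Theorem~\ref{Nec}, where $W\in\mathcal F$ is established (for $Y=l_p$, $Z=l_q$) via Corollary~\ref{class_F}. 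Your plan to verify $W\in\mathcal F$ here is therefore unnecessary, and it is also not available at the stated generality: the proposition is for arbitrary $Y,Z$ with the listed properties, and from those hypotheses alone you cannot conclude that $Y^*$ has MP or that $l_1(\Gamma,Y^*)$ omits $c_0$; you yourself had to specialize to reflexive $Y$. Finally, the forward reference to Section~\ref{Kc0} is misplaced in any case: Theorem~\ref{no_c0_l1} concerns embeddings of $\mathcal K(c_0)\sim c_0(l_1)$ and is used to \emph{separate} isomorphism classes (Corollary~\ref{notiso}), not to put $W$ into $\mathcal F$. So drop the middle paragraph; what remains is correct and is the paper's proof.
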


Our goal in the next few sections is to show that
the  $\mathcal K(C(\lambda_0,Y),C(\psi(\lambda,\xi), Z))$  spaces are from
distinct isomorphism classes if $Y$ is $l_p$, and $Z$ is $l_q$
with $1<p\le q<\infty.$

\begin{remark} \label{card} Observe that if $\lambda$ and $\xi$ are infinite ordinals,
then $|\lambda|=|\lambda_0|$ and $|\xi|=|\psi(\lambda,\xi)|.$
\end{remark}

\section{ On the subspaces of   ${\mathcal K} (c_0(X),
l_{q}(Y))$ spaces,   $1\leq q <\infty$  }\label{Kc0}

The main aim of this section is to prove   Theorem \ref{no_c0_l1}    which
is  the key ingredient for completing the isomorphic classification of the
spaces of compact operators considered in this work.  To do this,  we need
an auxiliary result. Let  $X$ and $Y$ be Banach spaces and $T\in{\mathcal
K} (c_{0}(X), l_{q}(Y))$, with   $1\leq q <\infty$.     Represent $T$
as a matrix with entries in ${\mathcal K} (X,Y)$.  By \cite[Proposition
1.c.8 and following Remarks]{LT} for any blocking of the $c_0$-sum and
$l_q$-sum the operator given by the block diagonal of
the matrix is a bounded linear operator with norm no larger than $\|T\|$.
The next result gives a little more information about $\|T\|.$

\begin{lemma}\label{c0sum} Let $X$ and $Y$ be Banach spaces and let $1\leq
q< \infty$.    Then the norm of a block diagonal operator of an operator in
${\mathcal K} (c_{0}(X),
l_{q}(Y))$ is the $l_q$ norm of the
norms of the operators on the blocks and the block diagonal operator is compact.
\end{lemma}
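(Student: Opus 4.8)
The plan is to reduce both assertions to a formula for the norm of the block diagonal operator and then bootstrap compactness from that formula. Write the given blocking as $c_{0}(X)=\left(\sum_{k}E_{k}\right)_{c_{0}}$ and $l_{q}(Y)=\left(\sum_{k}F_{k}\right)_{l_{q}}$, so that the block diagonal operator $D$ of $T$ has blocks $D_{k}\colon E_{k}\to F_{k}$, where $D_{k}$ is the corresponding submatrix of $T$, and the off-diagonal blocks of $D$ vanish. Observe first that $D_{k}=Q_{k}\,T\,J_{k}$, where $J_{k}\colon E_{k}\to c_{0}(X)$ is the canonical isometric inclusion and $Q_{k}\colon l_{q}(Y)\to F_{k}$ the canonical norm-one coordinate projection; since $T$ is compact, each $D_{k}$ is compact.

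For the norm formula, note that if $x=(x_{k})_{k}$ has $\|x\|_{c_{0}}\le 1$ then $\|x_{k}\|_{E_{k}}\le 1$ for every $k$, so
\[
\|Dx\|_{l_{q}}^{q}=\sum_{k}\|D_{k}x_{k}\|_{F_{k}}^{q}\le\sum_{k}\|D_{k}\|^{q},
\]
whence $\|D\|\le\left(\sum_{k}\|D_{k}\|^{q}\right)^{1/q}$ (with the obvious convention if the right-hand side is infinite). Conversely, fix $\varepsilon>0$ and $N$, pick $x_{k}\in E_{k}$ with $\|x_{k}\|\le 1$ and $\|D_{k}x_{k}\|\ge(1-\varepsilon)\|D_{k}\|$ for $k\le N$, and evaluate $D$ on the finitely supported vector $(x_{1},\dots,x_{N},0,0,\dots)$, whose $c_{0}$-norm is at most $1$; this yields $\|D\|\ge(1-\varepsilon)\left(\sum_{k\le N}\|D_{k}\|^{q}\right)^{1/q}$. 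Letting $\varepsilon\to0$ and $N\to\infty$, and using the a priori bound $\|D\|\le\|T\|<\infty$ from \cite[Proposition 1.c.8]{LT}, we get both $\sum_{k}\|D_{k}\|^{q}<\infty$ and $\|D\|=\left(\sum_{k}\|D_{k}\|^{q}\right)^{1/q}$.

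For compactness, let $D^{(N)}$ be the block diagonal operator with blocks $D_{1},\dots,D_{N}$ and all further blocks zero. Then $D^{(N)}$ factors as the norm-one projection onto $\bigoplus_{k\le N}E_{k}$, followed by the finite direct sum $\bigoplus_{k\le N}D_{k}$, followed by the isometric inclusion of $\bigoplus_{k\le N}F_{k}$; the middle operator is a finite direct sum of compact operators and hence compact, so $D^{(N)}$ is compact. Applying the norm formula already proved to the tail block diagonal $D-D^{(N)}$ gives $\|D-D^{(N)}\|=\left(\sum_{k>N}\|D_{k}\|^{q}\right)^{1/q}\to0$ as $N\to\infty$, because $\sum_{k}\|D_{k}\|^{q}<\infty$. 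Thus $D$ is a norm limit of compact operators, and is therefore compact.

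The argument is elementary, and there is no serious obstacle; the only points needing a little care are that the summability $\sum_{k}\|D_{k}\|^{q}<\infty$ has to be deduced from the a priori bound $\|D\|\le\|T\|$ rather than assumed, and that compactness of the blocks — and hence of $D$ — genuinely uses the compactness of $T$, not merely its boundedness.
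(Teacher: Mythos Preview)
Your proof is correct and follows essentially the same approach as the paper for the norm formula (upper bound by direct estimate, lower bound by testing on finitely supported almost-extremal vectors). For compactness your route is a mild streamlining: you deduce $\sum_k\|D_k\|^q<\infty$ directly from the norm formula together with the a~priori Tong bound $\|D\|\le\|T\|$, and then use the formula on the tail to get $\|D-D^{(N)}\|\to0$, whereas the paper instead runs a gliding hump to make $\|(I-Q_n)T\|$ small and applies Tong's inequality a second time to the tail operator; both arguments arrive at the same finite-truncation approximation, and both use compactness of $T$ in the same essential way (to make the blocks, and hence the truncations, compact).
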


\begin{proof}  
Let $D_j$ be the $j$th block of
the block diagonal operator $D$ and $(x_j) \in c_{0}(X)$ such that for each
$j,$ $x_j$ is supported block corresponding to $D_j.$ Then
\begin{align*}
\|(D_j x_j)\|_{l_{q}(Y)}=\left ( \sum \|D_j x_j\|_Y^q\right )^{1/q} &\leq\left ( \sum
\|D_j\|^q \|x_j\|_X^q\right )^{1/q} \\ &\le (\sup_j \|x_j\|_X) \|(D_j)\|_{l_q}.  \end{align*}    
Suppose $1>\epsilon>0,$ $x_j \in X$, supported in the $j$th block , with $\|x_j\|_X=1$ and $$\|D_j
x_j\|\ge (1-\epsilon)\|D_j\|,$$ $j=1,2,\dots.$ For each $N$, let
$z_N=(x_1,x_2,\dots,x_N,0,0,\dots).$ Then
\begin{align*}
\|D z_N\|=\|(D_j x_j)_{j\le N}\|_{l_{q}(Y)}
&=\left ( \sum_{j\le N}
\|D_j x_j\|_Y^q\right )^{1/q}\\ &\ge \left ( \sum_{j\le N}
(1-\epsilon)\|D_j\|^q\right )^{1/q}
\end{align*}
Taking the supremum over $\epsilon$ and $N,$ gives $$\|D\|\ge
\|(D_j)\|_{l_q}.$$

It remains to show that the diagonal of a compact operator is compact. Let
$Q_n$, respectively, $R_n$, be the projection onto the first $n$
coordinates of $l_q(Y),$ respectively, $c_0(X)$. A
gliding hump argument shows that for any $T \in {\mathcal K} (c_{0}(X),
l_{q}(Y))$ and $\epsilon>0$ there is an $N$ such that for all $n \ge N,$
$$\|(I-Q_n)T\|<\epsilon.$$ Observe that for $j>n$,
$$(Q_j-Q_{j-1})T(R_j-R_{j-1})=(Q_j-Q_{j-1})(I-Q_n)T(R_j-R_{j-1}).$$
Thus for $j>n$, the diagonal
element of $T$, $D_j$, is also the diagonal element of $(I-Q_n)T$.
By another application of Tong's result \cite[Proposition
1.c.8 ]{LT} it follows that
$$\|(D_j)_{j<\infty}-(D_1,D_2,\dots,D_N,0,0,\dots)\|<\epsilon.$$ Thus the
diagonal of $T$ is the limit of compact operators and therefore is compact.

\end{proof}

The lemma will be applied near the end of the proof of the next result with
$X=l_p$ and $Y=l_q.$

\begin{theorem} \label{no_c0_l1} Suppose that $1<p\le q<\infty.$ Then
$${\mathcal K}(c_{0}) \not \hr {\mathcal K}(c_{0}(l_p),l_{q}).$$
\end{theorem}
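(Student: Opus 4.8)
The plan is to argue by contradiction: suppose $\mathcal{K}(c_0) \hookrightarrow \mathcal{K}(c_0(l_p), l_q)$ via an isomorphic embedding. The key structural feature of $\mathcal{K}(c_0)$ to exploit is that it contains $c_0(\mathbb{N}\times\mathbb{N})$ isometrically (the diagonal matrices), and more importantly it contains copies of $c_0$ ``spread across infinitely many independent directions'' in a way that $\mathcal{K}(c_0(l_p), l_q)$ cannot accommodate. Concretely, I would first note that $\mathcal{K}(c_0)$ contains an isomorphic copy of $\left(\sum_n c_0\right)_{c_0}$ — indeed $\mathcal{K}(c_0) \sim \mathcal{K}(c_0, c_0)$ already contains such a sum via block-diagonal placement. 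So it suffices to show that $\left(\sum_n c_0\right)_{c_0} \not\hookrightarrow \mathcal{K}(c_0(l_p), l_q)$, or to extract from an embedding of $\mathcal{K}(c_0)$ a contradiction using Lemma~\ref{c0sum}.

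First I would use the matrix representation of operators in $\mathcal{K}(c_0(l_p), l_q)$ with entries in $\mathcal{K}(l_p, l_q)$, together with the reduction to block-diagonal operators from Lemma~\ref{c0sum}: for any blocking, the block-diagonal compression has norm equal to the $l_q$-norm of the block norms, and is still compact. The strategy is a gliding-hump / diagonalization argument applied to the image of a suitably chosen copy of $c_0$ inside $\mathcal{K}(c_0)$. Given the embedding $U\colon \mathcal{K}(c_0) \to \mathcal{K}(c_0(l_p), l_q)$, take the canonical $c_0$-basis $(e_{ij})$ of diagonal matrix units in $\mathcal{K}(c_0)$; their images $U(e_{ij})$ form a sequence equivalent to the $c_0$-basis. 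By a gliding-hump argument (using compactness of each $U(e_{ij})$ and the structure of $c_0(l_p)$ and $l_q(l_q)=l_q$), I would pass to a subsequence on which the $U(e_{ij})$ are, up to small perturbation, supported on disjoint blocks of the matrix — hence by Lemma~\ref{c0sum} their partial sums have norm behaving like an $l_q$-sum rather than a $c_0$-sum, contradicting that they are $c_0$-equivalent. The point is that $c_0$-many disjointly-supported block operators in $\mathcal{K}(c_0(l_p), l_q)$ have partial sums whose norms grow like $m^{1/q}$, which is incompatible with uniform boundedness of partial sums forced by the $c_0$ structure.

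The main obstacle is making the disjointification rigorous: the images $U(e_{ij})$ need not be finitely supported in the matrix, so one must perturb them — using compactness and the gliding-hump estimate $\|(I-Q_n)T\| < \epsilon$ from the proof of Lemma~\ref{c0sum}, plus the analogous statement for the domain projections $R_n$ — to reduce to genuinely block-supported operators while controlling the error so that $c_0$-equivalence is preserved. One must also handle the ``off-diagonal'' part: after disjointifying the row-and-column supports, the relevant operators become block-diagonal up to $\epsilon$, at which point Lemma~\ref{c0sum} gives the exact $l_q$-norm computation that yields the contradiction. A secondary technical point is choosing which copy of $c_0$ inside $\mathcal{K}(c_0)$ to track — using the diagonal copy of $c_0(\mathbb{N}^2)$ and a careful indexing so that the gliding hump can be run simultaneously in both matrix coordinates. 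Once the block-diagonal reduction is in place, the final estimate is the routine observation that $\|\sum_{j=1}^m D_j\|_{\mathcal{K}(c_0(l_p),l_q)} = \big(\sum_{j=1}^m \|D_j\|^q\big)^{1/q} \to \infty$ while the corresponding sum of $c_0$-basis vectors stays bounded, contradicting that $U$ is an isomorphism onto its range.
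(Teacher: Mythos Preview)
Your plan has a genuine gap: you are effectively trying to show that $c_0$ does not embed into $\mathcal K(c_0(l_p),l_q)$, and this is false. By (2) in the paper, $\mathcal K(l_p,l_q)$ is isomorphic to its $c_0$-sum, so $c_0\hookrightarrow \mathcal K(l_p,l_q)\hookrightarrow \mathcal K(c_0(l_p),l_q)$. Concretely, take a $c_0$-basic sequence inside $\mathcal K(l_p,l_q)$ and place it in the first domain block of $c_0(l_p)$; then every term has $V_k R_1=V_k$, so the domain-side gliding hump you propose cannot get off the ground. The statement ``$\|T(I-R_n)\|\to 0$'' that you quote is the \emph{tail} estimate coming from compactness; what a disjointification needs is the \emph{initial} estimate $\|V_k R_n\|\to 0$ as $k\to\infty$ for fixed $n$, and that is simply not available from compactness or from a $c_0$-equivalence alone.

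The paper's proof avoids this by tracking a richer structure: it uses $\mathcal K(c_0)\sim (\sum l_1)_{c_0}$ rather than the diagonal $c_0(\mathbb N^2)$. The $l_1$-fibers are the crucial extra ingredient. For fixed $s$ the sequence $(T_{s,n})_n$ is equivalent to the $l_1$-basis, and Samuel's theorem that $l_1\not\hookrightarrow \mathcal K(l_p,l_q)$ is what forces $\|U_{s,m}P_r\|\to 0$ after passing to a block sequence --- this is precisely the domain-side disjointification that your argument asserts but cannot justify. Only \emph{after} the domain supports are made disjoint can one prove $\|Q_k U_{s_t,m_t}\|\to 0$ (the paper's argument for this uses the disjoint domain supports to build a norm-one vector $\sum x_i$ whose image has norm $\gtrsim j$), and then Lemma~\ref{c0sum} yields the $l_q$-versus-$c_0$ contradiction. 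So your final step is right, but the route to block-diagonal form requires the $l_1$ non-embedding, which is entirely absent from your plan.
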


\begin{proof} First of all notice that ${\mathcal  K}(c_{0})$ is isomorphic
to  $$l_i{\hat{\hat{\otimes}}}c_0=c_{0}(l_1)=(\sum_{n=1}^\infty l_1)_{c_0} .$$ 
Let $P_n$ denote the standard projection from  $c_{0}(l_p)$ onto
$(\sum_{k=1}^n l_p)_{\infty}$ and let 
$(Q_n)$
be the basis projections for
$l_q$.

Suppose that $(T_{s,n})$ is a normalized sequence in ${\mathcal
K}(c_{0}(l_p),l_{q})$ which is equivalent to the unit vector basis of
$(\sum_{n=1}^\infty l_1)_{c_0}$, i.e., there are $\delta>0$ and $K<\infty$ such that
$$\delta \max_s \sum_n|a_{s,n}| \le \|\sum_s \sum_n a_{s,n} T_{s,n}\|\le K
\max_s \sum_n|a_{s,n},|$$ for
all finitely non-zero sequences $(a_{s,n}).$

For fixed $s,r$, $(T_{s,n} P_r)$ is (equivalent to) a sequence in ${\mathcal
K}(\sum_{n=1}^r l_p,l_q),$ which is isomorphic to ${\mathcal 
K}(l_p,l_q),$ and thus by    \cite[Theorem 3.1]{S1}           cannot be equivalent
to the unit vector basis of $l_1.$   Hence there exists a normalized
blocking of
$(T_{s,n})_n$, $(U_{s,m}),$  and an increasing sequence $(r_{s,m})_m$ such that 
$$\|U_{s,m}P_{r_{s,m}}\|\rightarrow 0.$$  Further, because $U_{s,m}$ is
compact, for fixed $s,m$,
$$\|U_{s,m}(I-P_{r})\|\rightarrow 0.$$ Thus by a perturbation argument and
passing to subsequences we may assume that $(r_{s,m})$ is strictly
increasing in some order and that
$$U_{s,m}(P_{r_{s,m}}-P_{r_{s',m'}})=U_{s,m},$$ for all $s,m,$ where
$r_{s',m'}$ is the maximal element strictly smaller than $r_{s,m}.$

For fixed $s,m$ $$\|(I-Q_k)U_{s,m}\|\rightarrow 0,$$ 
because $q<\infty$
and $U_{s,m}$ is compact.
If $(s_t,m_t)$ is a
sequence with $(s_t)$ strictly increasing $(U_{s_t,m_t})$ is equivalent to
the usual unit vector basis of $c_0$. 
For fixed $k$ we claim that $$\|Q_k U_{s_t,m_t}\|\rightarrow 0.$$ Suppose not
and let $x_t \in
c_0(l_p)$, $\|x_t\|=1$ and $$\|Q_k U_{s_t,m_t}x_t\|>\|Q_k U_{s_t,m_t}\|/2,$$
for all $t$. By
passing to a subsequence and a perturbation argument
because $k$ is fixed, we may assume that $(Q_k U_{s_t,m_t}x_t)$ is independent of $t$. We may
also assume by the choice of $r_{s,m}$
that $$(P_{r_{s_t,m_t}}-P_{r_{s'_t,m'_t}})x_t=x_t,$$ for all $t$.
Thus for any finite $j$, $\|\sum_{i=1}^j x_i\|=1$ but $$K\ge \|\sum_t Q_k
U_{s_t,m_t}\sum_{i=1}^j x_i\|=\|\sum_{t=1}^j Q_kU_{s_t,m_t}x_t\|>j\|Q_k
U_{s_1,m_1}\|/2.$$ This contradicts $K<\infty$ and establishes the claim.

By again passing to a suitable subsequence and a perturbation
we can assume that we have
$(U_{s_t,m_t})$ with $(s_t)$ strictly increasing and $(k_t)$ strictly
increasing such that $$U_{s_t,m_t}=(Q_{k_t}-Q_{k_{t-1}})U_{s_t,m_t}.$$
However by Lemma \ref{c0sum} this implies that
$(U_{s_t,m_t})_t$ is equivalent to the basis of
$l_q$.
\end{proof}
\begin{remark} The proof of Theorem \ref{no_c0_l1} uses $l_p$ and $l_q$ in
a very minor way. $l_p$ could be replaced by a space $Y$ which isomorphic to
its square, and $l_q$ by a space with a basis that has a non-trivial lower
estimate such that $l_1$ is isomorphic to no subspace of  $\mathcal K(Y,Z).$
\end{remark}

\begin{corollary}\label{notiso} If $\lambda,\xi \ge \omega$, and $1<p\le q<\infty$, then
$\mathcal K(C(\lambda,l_p),C(\xi,l_q))$ is not isomorphic to a subspace of
$\mathcal K(C(\lambda,l_p),l_q).$
\end{corollary}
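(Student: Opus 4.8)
The plan is to distinguish the two spaces by a subspace invariant: I will show that $\mathcal K(c_{0})$ embeds isomorphically into $\mathcal K(C(\lambda,l_p),C(\xi,l_q))$ whenever $\lambda,\xi\ge\omega$, but does not embed into $\mathcal K(C(\lambda,l_p),l_q)$. Granting both facts, an embedding of $\mathcal K(C(\lambda,l_p),C(\xi,l_q))$ into $\mathcal K(C(\lambda,l_p),l_q)$ would force $\mathcal K(c_{0})$ into $\mathcal K(C(\lambda,l_p),l_q)$, a contradiction. Theorem \ref{no_c0_l1} already handles the non-embedding when $\lambda$ is countable, since then $\mathcal K(C(\lambda,l_p),l_q)\cong\mathcal K(c_{0}(l_p),l_q)$; the real content is extending this to arbitrary $\lambda$.

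For the embedding: it is elementary that $C(\xi)$ contains an isometric copy of $c_{0}$ when $\xi\ge\omega$, and $C(\xi)$ embeds isometrically into $C(\xi,l_q)$ via $g\mapsto g(\cdot)e$ for a fixed unit vector $e\in l_q$, so $c_{0}\hookrightarrow C(\xi,l_q)$. On the other side $C(\lambda)$ is complemented in $C(\lambda,l_p)$ (section $g\mapsto g(\cdot)e$, projection $f\mapsto e^{*}\circ f$ with $e\in l_p$, $e^{*}\in l_p^{*}$, $e^{*}(e)=1$), and $c_{0}$ is complemented in $C(\lambda)$ for $\lambda\ge\omega$, so $c_{0}$ is complemented in $C(\lambda,l_p)$. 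Since a complemented subspace $A$ of $X$ together with any subspace $B$ of $Y$ yields an embedding $\mathcal K(A,B)\hookrightarrow\mathcal K(X,Y)$, we get $\mathcal K(c_{0})=\mathcal K(c_{0},c_{0})\hookrightarrow\mathcal K(c_{0},C(\xi,l_q))\hookrightarrow\mathcal K(C(\lambda,l_p),C(\xi,l_q))$.

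For the non-embedding: because $l_q$ has the approximation property, $\mathcal K(C(\lambda,l_p),l_q)\cong C(\lambda,l_p)^{*}{\hat{\hat\otimes}}l_q\cong l_{1}(\Gamma,l_{p'}){\hat{\hat\otimes}}l_q\cong\mathcal K(c_{0}(\Gamma,l_p),l_q)$, where $|\Gamma|=|\lambda|\ge\aleph_{0}$, so it suffices to prove $\mathcal K(c_{0})\not\hookrightarrow\mathcal K(c_{0}(\Gamma,l_p),l_q)$ for an arbitrary infinite set $\Gamma$. The key point is that every compact $T\colon c_{0}(\Gamma,l_p)\to l_q$ is ``countably supported'': for each $\varepsilon>0$ only finitely many $\gamma$ satisfy $\|T|_{l_p^{(\gamma)}}\|>\varepsilon$, where $l_p^{(\gamma)}$ is the $\gamma$-th coordinate copy of $l_p$; indeed, disjointly supported norm-one vectors $x_{\gamma}\in l_p^{(\gamma)}$ with $\|Tx_{\gamma}\|>\varepsilon$ would be equivalent to the $c_{0}$-basis, hence weakly null, and a compact operator must send them to a norm-null sequence. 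Thus $S=\{\gamma:T|_{l_p^{(\gamma)}}\ne0\}$ is countable and $T=TP_{S}$, $P_{S}$ the canonical projection of $c_{0}(\Gamma,l_p)$ onto $c_{0}(S,l_p)$. Now if $V\subseteq\mathcal K(c_{0}(\Gamma,l_p),l_q)$ were isomorphic to the separable space $\mathcal K(c_{0})$, pick a countable dense $\{T_{n}\}\subseteq V$ and set $\Gamma_{0}=\bigcup_{n}S_{T_{n}}$, a countable (without loss of generality infinite) subset of $\Gamma$. From $T_{n}=T_{n}P_{\Gamma_{0}}$ and continuity of $R\mapsto RP_{\Gamma_{0}}$, passing to limits gives $T=TP_{\Gamma_{0}}$ for all $T\in V$; hence $V$ lies in $\{T:T=TP_{\Gamma_{0}}\}$, which is isometric via restriction to $\mathcal K(c_{0}(\Gamma_{0},l_p),l_q)\cong\mathcal K(c_{0}(l_p),l_q)$. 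So $\mathcal K(c_{0})\hookrightarrow\mathcal K(c_{0}(l_p),l_q)$, contradicting Theorem \ref{no_c0_l1}.

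The one step requiring genuine care is the countable-support property of compact operators on $c_{0}(\Gamma,l_p)$: this is precisely the device that reduces a possibly uncountable $\lambda$ to the countable case of Theorem \ref{no_c0_l1}. The remaining ingredients — the complemented embeddings of $c_{0}$, the identifications $\mathcal K(X,Y)\cong X^{*}{\hat{\hat\otimes}}Y$ under the approximation property, the identification of $\{T:T=TP_{\Gamma_{0}}\}$ with $\mathcal K(c_{0}(\Gamma_{0},l_p),l_q)$, and the density/limit argument — are routine.
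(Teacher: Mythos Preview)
Your argument is correct and follows essentially the same route as the paper: both use $\mathcal K(c_{0})\sim c_{0}(l_{1})$ as the distinguishing subspace, exhibit it inside $\mathcal K(C(\lambda,l_{p}),C(\xi,l_{q}))$, and then invoke separability to reduce the non-embedding into $\mathcal K(C(\lambda,l_{p}),l_{q})$ to the countable case handled by Theorem~\ref{no_c0_l1}. The only cosmetic differences are that the paper obtains the embedding of $c_{0}(l_{1})$ via the tensor identification $\mathcal K(C(\lambda,l_{p}),C(\xi,l_{q}))\sim C(\xi,\,l_{1}(\lambda,l_{p'})\hat{\hat\otimes}l_{q})$ rather than through complemented copies of $c_{0}$, and it states the separability reduction in one line whereas you spell out the countable-support argument for compact operators on $c_{0}(\Gamma,l_{p})$ explicitly.
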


\begin{proof}  Observe that $ \mathcal K(C(\lambda,l_p),C(\xi,l_q))$ is isomorphic to
$$l_1(\lambda,l_{p'}){\hat{\hat{\otimes}}}C(\xi,l_q)\sim
l_1(\lambda,l_{p'}){\hat{\hat{\otimes}}}C(\xi){\hat{\hat{\otimes}}}l_q \sim
C(\xi, l_1(\lambda,l_{p'}){\hat{\hat{\otimes}}}l_q).$$ 
Thus, it follows that  $c_0(l_1)$ is
isomorphic to a subspace of $ \mathcal K(C(\lambda,l_p),C(\xi,l_q))$.
If $ \mathcal K(C(\lambda,l_p),C(\xi,l_q)$ is isomorphic to a subspace of
$\mathcal K(C(\lambda,l_p),l_q)$ then $c_0(l_1)$ is also. Because $c_0(l_1)$ is
separable, this would imply that $c_0(l_1)$ is isomorphic to a subspace of
$\mathcal K(C(\mu,l_p),l_q)$ for some countable ordinal $\mu$ and hence to
$\mathcal K(C(\omega,l_p),l_q)$, contradicting  Theorem \ref{no_c0_l1}.
\end{proof}

\section{Copies of $c_{0}(\Gamma)$ in $\mathcal K(l_{p}, X)$ spaces, $1<p<
\infty$}\label{c0gamma}

Before proving our main theorem  we need  another preliminary result which
will yield additional spaces that are in the class $\mathcal F$ and thus
Theorem \ref{Bu} will be applicable. In particular we will need to know
that
the spaces $\mathcal K(l_{p}, X)$, with $1<p< \infty$,
contain a copy of  $c_{0}(\Gamma)$ with $|\Gamma| =\aleph_{1}$ if
and only if  the Banach space $X$ has the same property. This is an
immediate consequence of the following proposition.
Below $\mathcal B(Y,X)$ is the space of bounded
linear operators from $Y$ to $X$.

\begin{proposition} \label{inj}If $X$ and $Y$ are Banach spaces and $Y$ is separable,
then $c_0(\Gamma)$, where $|\Gamma|=\aleph_1$, is isomorphic to a subspace
of $\mathcal B(Y,X)$ if and only if $c_0(\Gamma)$ is isomorphic to a
subspace of $X$.
\end{proposition}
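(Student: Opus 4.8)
The plan is to prove the nontrivial direction: if $c_0(\Gamma)$ embeds in $\mathcal B(Y,X)$ with $|\Gamma|=\aleph_1$ and $Y$ separable, then $c_0(\Gamma)$ embeds in $X$. (The converse is clear, since $X$ sits inside $\mathcal B(Y,X)$ as the rank-one operators $y\mapsto \phi(y)x$ for a fixed norming functional $\phi$.) So suppose $(T_\gamma)_{\gamma\in\Gamma}$ is a normalized family in $\mathcal B(Y,X)$ equivalent to the $c_0(\Gamma)$-basis, with constants $\delta,K$: $\delta\max_{\gamma\in A}|a_\gamma|\le\|\sum_{\gamma\in A}a_\gamma T_\gamma\|\le K\max_{\gamma\in A}|a_\gamma|$ for every finite $A\subset\Gamma$ and scalars. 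The first step is to extract, for each $\gamma$, a witness $y_\gamma\in Y$ with $\|y_\gamma\|\le 1$ and $\|T_\gamma y_\gamma\|\ge\delta/2$ (say).

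The key step exploits separability of $Y$: fix a countable dense set $D=\{d_k:k<\omega\}$ in the unit ball of $Y$. For each $\gamma$ choose $k(\gamma)<\omega$ with $\|T_\gamma d_{k(\gamma)}\|\ge\delta/2$; this uses $\|T_\gamma\|=1$ and density. Since $\Gamma$ is uncountable and $\omega$ is countable, by the pigeonhole principle there is a single index $k^*$ and an uncountable subset $\Gamma'\subset\Gamma$ with $|\Gamma'|=\aleph_1$ such that $\|T_\gamma d_{k^*}\|\ge\delta/2$ for all $\gamma\in\Gamma'$. Now set $x_\gamma:=T_\gamma d_{k^*}\in X$ for $\gamma\in\Gamma'$. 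The claim is that $(x_\gamma)_{\gamma\in\Gamma'}$ is equivalent to the unit vector basis of $c_0(\Gamma')$: for any finite $A\subset\Gamma'$ and scalars $(a_\gamma)$, the upper estimate is automatic from $\|\sum_{\gamma\in A}a_\gamma x_\gamma\|=\|(\sum_{\gamma\in A}a_\gamma T_\gamma)d_{k^*}\|\le\|\sum_{\gamma\in A}a_\gamma T_\gamma\|\le K\max|a_\gamma|$, and the lower estimate follows because each individual $\|a_\gamma x_\gamma\|=|a_\gamma|\,\|T_\gamma d_{k^*}\|\ge(\delta/2)|a_\gamma|$, and a $c_0$-type lower bound on a sum dominated by any single term gives $\|\sum_{\gamma\in A}a_\gamma x_\gamma\|\ge \max_{\gamma\in A}\|a_\gamma x_\gamma\|\ge(\delta/2)\max_{\gamma\in A}|a_\gamma|$ — wait, that last inequality is false in general, so this is where care is needed.

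The main obstacle, and the step I expect to require the real work, is the lower $c_0$-estimate for $(x_\gamma)$: knowing that $(T_\gamma)$ is $c_0$-like does not by itself force the images $T_\gamma d_{k^*}$ to be $c_0$-like, since cancellation could occur in $X$. The fix is to pass to a further subset using a weak-compactness / biorthogonality argument: $c_0(\Gamma)$ embeds in a dual-like space only in a "long unconditional" way, and one extracts from $(x_\gamma)$ a long basic subsequence. Concretely I would argue as in the standard theory (e.g. using that an uncountable family in a Banach space with $\|x_\gamma\|$ bounded below that is not equivalent to $c_0(\Gamma')$ on any uncountable subset must have, by a Rosenthal-type / $\aleph_1$ pigeonhole on finitely-supported functionals, an uncountable subset that is $\ell_1^n$-like for all $n$ or clusters weakly — and the $c_0$-structure upstairs rules this out). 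Alternatively, and more cleanly, one uses that $\|\sum_{\gamma\in A}\varepsilon_\gamma x_\gamma\|\le K$ for all signs $\varepsilon_\gamma$ and all finite $A$ (inherited from the $T_\gamma$), so $(x_\gamma)$ is uniformly bounded under sign changes; combined with $\|x_\gamma\|\ge\delta/2$, a result of the type "a uniformly $\varepsilon$-separated, sign-bounded uncountable family contains an uncountable $c_0(\Gamma')$-basic subfamily" (available since $|\Gamma'|=\aleph_1$ and one only needs to handle finite supports, reducing to the classical finite statement plus an $\aleph_1$-pigeonhole over the finitely many "patterns") yields the desired subset $\Gamma''\subset\Gamma'$ with $|\Gamma''|=\aleph_1$ on which $(x_\gamma)$ is equivalent to the $c_0(\Gamma'')$-basis. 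Since $|\Gamma''|=\aleph_1=|\Gamma|$, this gives $c_0(\Gamma)\hookrightarrow X$, completing the proof.
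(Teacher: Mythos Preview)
Your overall strategy coincides with the paper's: use separability of $Y$ to pigeonhole onto a single vector $d_{k^*}$ (the paper's $y_{j_0}$) so that $x_\gamma:=T_\gamma d_{k^*}$ has norm bounded below on an uncountable $\Gamma'$, and observe that the upper $c_0$-estimate for $(x_\gamma)$ is inherited from $(T_\gamma)$. You also correctly locate the only real difficulty, the lower $c_0$-estimate.

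The gap is that you do not actually prove the lower estimate. Your option (b) asserts a lemma of the form ``a sign-bounded, uniformly separated uncountable family contains an uncountable $c_0$-basic subfamily,'' but the parenthetical justification (``reducing to the classical finite statement plus an $\aleph_1$-pigeonhole over the finitely many patterns'') is not an argument: there is no finite list of patterns to pigeonhole over, and no finite version of the statement that automatically propagates. Your option (a) gestures at ``Rosenthal-type'' combinatorics but in the wrong direction (ruling out $\ell_1$ rather than extracting disjointness). As written, the proof is incomplete at precisely the point that carries all the content.

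The paper closes this gap as follows. For each $\gamma\in\Gamma'$ choose $x_\gamma^*\in X^*$, $\|x_\gamma^*\|=1$, with $x_\gamma^*(T_\gamma y_{j_0})>1/n_0$. Pull back through the isomorphism: $f_\gamma:=S^*(y_{j_0}\otimes x_\gamma^*)\in c_0(\Gamma)^*=l_1(\Gamma)$, a bounded family in $l_1(\Gamma)$ whose $\gamma$-th coordinate exceeds $1/n_0$. Rosenthal's disjointness lemma \cite[Theorem 3.4]{R} then yields an uncountable $N'\subset\Gamma'$ on which the $f_\gamma$ are essentially disjointly supported, hence equivalent to the $l_1(N')$ basis and norming $[e_\gamma:\gamma\in N']$. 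This immediately gives the lower $c_0$-estimate for $(T_\gamma y_{j_0})_{\gamma\in N'}$. Incidentally, your claim (b) is true, and its correct proof is exactly this: pick norming functionals for the $x_\gamma$, note that the sign-bound forces $(x_\gamma^*(x_\beta))_\beta\in l_1(\Gamma')$ uniformly, and apply Rosenthal's lemma. So the missing ingredient in your write-up is precisely Rosenthal's disjointness lemma, not a further combinatorial reduction.
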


\begin{proof}
Suppose that $(T_\gamma)_{\gamma \in \Gamma}\subset \mathcal B(Y,X)$  is
the image of the coordinate vectors $(e_\gamma)$ in $c_0(\Gamma)$ under some
isomorphism $S$. Let $(y_j)_{j=1}^\infty$ be a sequence which is dense in the
unit sphere of $Y$. For $n,j \in \mathbb N$ let $$N_{j,n}=\{\gamma \in
\Gamma: \|T_\gamma(y_j)\|> 1/n\}.$$ Since $\Gamma= \cup_{j,n} N_{j,n} $  there
is some $j_0,n_0$ so that $|N_{j_0,n_0}|=\aleph_1.$ For each $\gamma \in
N_{j_0,n_0}$ let $x_\gamma^* \in X^*$ such that $\|x_\gamma^*\|=1$
and $$S^*(y_{j_0}\otimes x_\gamma^*)(e_\gamma)=T^*_\gamma x_\gamma^*
(y_{j_0})=x_\gamma^*(T_\gamma(y_{j_0})>1/{n_0}.$$ By Rosenthal's
disjointness lemma \cite[Theorem 3.4]{R} and the remark following, 
there is a subset $N'$ of
$N_{j_0,n_0}$ of the same cardinality such that $(S^*(y_{j_0}\otimes
x_\gamma^*))_{\gamma\in N'}$ is equivalent to the standard unit  vectors in
$l_1(N)$ and norms $[e_\gamma:\gamma\in N']$. Therefore the mapping
$S_1:[e_\gamma:\gamma\in N'] \rightarrow X$ defined by
$S_1(e_\gamma)=T_\gamma(y_{j_0})$ extends to an isomorphism into $X$.

The converse is obvious.
\end{proof}

\begin{corollary}\label{class_F}
Let  $V$ and $W$ be Banach spaces such that $c_0$ is isomorphic to no subspace of  $V$, $V$ has MP, and
$W^*$ is separable and has the approximation property. If $|\lambda|$ is
non-measurable then 
$\mathcal K(W,l_1(\lambda,V))$ is in the class $F$.
\end{corollary}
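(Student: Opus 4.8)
\textbf{Proof proposal for Corollary \ref{class_F}.}

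The goal is to verify that $\mathcal K(W, l_1(\lambda, V))$ lies in the class $\mathcal F$, which by definition means two things: it has the Mazur property, and it contains no copy of $c_0(\Gamma)$ with $|\Gamma| = \aleph_1$. I would treat these two requirements separately, since the tools for each are already available in the excerpt.

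For the Mazur property, the plan is to first identify $\mathcal K(W, l_1(\lambda, V))$ with an injective tensor product. Since $W^*$ is separable and has the approximation property, the standard fact (as used for $\mathcal K(C(\lambda,Y),C(\xi,Z))$ above, via \cite[Proposition 5.3]{DF}) gives $\mathcal K(W, l_1(\lambda, V)) \sim W^* \hat{\hat\otimes} l_1(\lambda, V)$. Now I would invoke the results of \cite{Ka} quoted in the Preliminaries: $\left(\sum_{\gamma \in \Gamma} X_\gamma\right)_{l_1}$ has MP when $|\Gamma|$ is non-measurable and each $X_\gamma$ has MP, so $l_1(\lambda, V)$ has MP because $|\lambda|$ is non-measurable and $V$ has MP; and then $X \hat{\hat\otimes} Y$ has MP when $X$ has MP and $Y$ is separable (Corollary 5.2.3 of \cite{Ka}), applied with $X = l_1(\lambda, V)$ and $Y = W^*$. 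This yields that $W^* \hat{\hat\otimes} l_1(\lambda, V) \sim l_1(\lambda, V) \hat{\hat\otimes} W^*$ has MP, hence so does the isomorphic space $\mathcal K(W, l_1(\lambda, V))$.

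For the absence of $c_0(\Gamma)$ with $|\Gamma| = \aleph_1$, I would argue that such a copy cannot exist because $l_1(\lambda, V)$ contains no such copy and then transfer this obstruction through the operator space. First, $l_1(\lambda, V)$ contains no copy of $c_0(\Gamma)$ with $|\Gamma|=\aleph_1$: any copy of $c_0$ inside $l_1(\lambda, V)$ would, after a small perturbation, live inside a countably supported sub-sum $l_1(\Lambda_0, V)$ with $\Lambda_0$ countable, contradicting that $c_0 \not\hookrightarrow V$ together with the fact that $l_1$-sums of spaces not containing $c_0$ do not contain $c_0$; and a copy of $c_0(\Gamma)$ certainly contains a copy of $c_0$, so none exists. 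Then, to pass from $l_1(\lambda, V)$ to $\mathcal K(W, l_1(\lambda, V)) \subseteq \mathcal B(W, l_1(\lambda, V))$, I would apply Proposition \ref{inj} with $Y = W$ (separable) and $X = l_1(\lambda, V)$: $c_0(\Gamma)$ with $|\Gamma|=\aleph_1$ embeds into $\mathcal B(W, l_1(\lambda, V))$ if and only if it embeds into $l_1(\lambda, V)$, and the latter fails. Since $\mathcal K(W, l_1(\lambda, V))$ is a subspace of $\mathcal B(W, l_1(\lambda, V))$, it contains no such copy either.

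The step I expect to require the most care is the claim that $l_1(\lambda, V)$ contains no copy of $c_0$ (equivalently no copy of $c_0(\Gamma)$, $|\Gamma|=\aleph_1$). The localization of a copy of $c_0$ to countably many coordinates is a routine gliding-hump/perturbation argument, and the fact that a countable $l_1$-sum of spaces without $c_0$ again omits $c_0$ is classical; still, one must be slightly careful that "$c_0$ is isomorphic to no subspace of $V$" is exactly the hypothesis being used and that the reduction to a single $l_1(\Lambda_0, V)$ with $\Lambda_0$ countable is legitimate. Everything else is a direct citation of \cite{Ka}, \cite{DF}, and Proposition \ref{inj}.
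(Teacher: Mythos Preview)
Your proposal is correct and follows essentially the same route as the paper: a gliding-hump argument shows $l_1(\lambda,V)$ omits $c_0$, Proposition~\ref{inj} (with $Y=W$, separable since $W^*$ is) then rules out $c_0(\Gamma)$ in $\mathcal B(W,l_1(\lambda,V))\supset\mathcal K(W,l_1(\lambda,V))$, and MP comes from the identification $\mathcal K(W,l_1(\lambda,V))\sim W^*\hat{\hat\otimes} l_1(\lambda,V)$ together with Kappeler's theorems on $l_1$-sums and injective tensor products. The only cosmetic difference is that the paper cites \cite[Proposition~4.3]{DF} for the tensor identification.
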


\begin{proof}
Because $V$ does not contain subspace isomorphic to  $c_0$, it follows from a gliding hump argument
that $l_1(\lambda,V)$ has the same property.
By Proposition \ref{inj}, $c_0(\Gamma)$ is not isomorphic to a subspace of
$\mathcal K(W,l_1(\lambda,V))$. To see that $\mathcal K(W,l_1(\lambda,V))$
has MP we note that $\mathcal K(W,l_1(\lambda,V))$ is isomorphic to  
$W^*{\hat{\hat{\otimes}}}l_1(\lambda,V)$ by \cite[Proposition 4.3]{DF}. By the
results of Kappeler \cite{Ka}, $l_1(\lambda,V)$ has MP and consequently 
$W^*{\hat{\hat{\otimes}}}l_1(\lambda,V)$ has MP.
\end{proof}

\section{ A classification of   ${\mathcal K}(C (\lambda,   l_{p}), C(\xi,
l_{q}))$ spaces, $1< p \leq q < \infty$} \label{classification} In this
section we  classify, up to isomorphism,  the spaces of compact operators
${\mathcal K}(C (\lambda,   l_{p}), C(\xi, l_{q}))$, with  $1< p \leq q <
\infty$. 

The following theorem gives necessary conditions for
two  ${\mathcal K}(C (\lambda,   l_{p}), C(\xi, l_{q}))$  spaces to be
isomorphic in terms of the notation introduced in Proposition
\ref{canonical_ord} and provides a converse to that result for $Y=l_p$ and
$Z=l_q.$
Recall that  the density character of
a Banach space $X$ is the smallest cardinal number $\delta$ such that
there exists a set of cardinality $\delta$ dense in $X$.

\begin{theorem} \label{Nec} Suppose that  $1 <p \leq q < \infty$  and
$\xi$,  $\eta$,  $\lambda$ and  $\mu$  are nonzero ordinals.
If ${\mathcal K}(C (\lambda,   l_{p}), C(\xi, l_{q}))$ is isomorphic to  ${\mathcal K}
(C(\mu, l_{p}),  C(\eta, l_{q})),$ 
then    $\lambda_0= \mu_0$. Moreover, if  ${|\lambda|}< m_{r}$
then $\psi(\lambda,\xi)
= \psi(\mu,\eta).$
\end{theorem}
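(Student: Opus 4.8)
The plan is to establish the two conclusions in turn, normalising first by Proposition~\ref{canonical_ord}: after replacing $\mathcal K(C(\lambda,l_p),C(\xi,l_q))$ by $\mathcal K(C(\lambda_0,l_p),C(\psi(\lambda,\xi),l_q))$ (and likewise on the other side) we may assume from the outset that $\lambda=\lambda_0$, $\mu=\mu_0$, $\xi=\psi(\lambda,\xi)$, $\eta=\psi(\mu,\eta)$. Put $W_\kappa=l_1(\kappa,l_{p'})\hat{\hat\otimes}l_q$. Since $l_q$ and $l_{p'}$ have the approximation property and the injective tensor product is associative and commutative, $\mathcal K(C(\lambda,l_p),C(\xi,l_q))\sim C(\xi,W_{\lambda_0})$, and moreover $W_\kappa\cong\mathcal K(c_0(\kappa,l_p),l_q)\cong\mathcal K(l_{q'},l_1(\kappa,l_{p'}))$.

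\emph{Proof that $\lambda_0=\mu_0$.} I would use the isomorphic invariant $\iota(Z)=\sup\{\kappa:\ l_1(\kappa)\hr Z\}$, with $\iota(Z)=0$ when $l_1\not\hr Z$, and show it equals $0$, $\aleph_0$, or $\abs\lambda$ according as $\lambda$ is finite, countably infinite, or uncountable. For the lower bound, $\mathbb R$ is complemented in $C(\xi,l_q)$ (a point evaluation in one coordinate), so $l_1(\lambda,l_{p'})\supseteq l_1(\abs\lambda)$ is complemented in $\mathcal K(C(\lambda,l_p),C(\xi,l_q))$; when $\lambda$ is finite, instead $\mathcal K(C(\lambda,l_p),C(\xi,l_q))\sim C(\xi,l_q)$, which is Asplund (its dual $l_1(\xi,l_{q'})$ has the Radon--Nikod\'ym property) and hence contains no $l_1$. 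For the upper bound when $\lambda$ is infinite one must check that $l_1(\kappa)\not\hr C(\xi,W_{\lambda_0})$ once $\kappa>\max(\abs\lambda,\aleph_0)$: for $\kappa>\abs\xi$ this is the density character estimate, and for $\abs\lambda<\kappa\le\abs\xi$ I would induct along the Cantor--Bendixson derivatives of $[1,\xi]$, reducing to $l_1(\kappa)\not\hr c_0(I,W_{\lambda_0})$, which follows from a $\Delta$-system argument together with the fact that $l_1$ embeds into no $c_0$-sum (the $l_1(\kappa)$-structure would then have to be carried, off a countable set, inside $W_{\lambda_0}$, forcing $\kappa\le\operatorname{dens}W_{\lambda_0}=\max(\abs\lambda,\aleph_0)$). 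This pins down $\lambda_0$.

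\emph{Proof that $\psi(\lambda,\xi)=\psi(\mu,\eta)$ when $\abs\lambda<m_r$.} Writing $\kappa_0=\lambda_0=\mu_0$, the hypothesis is now $C(\psi(\lambda,\xi),W_{\kappa_0})\sim C(\psi(\mu,\eta),W_{\kappa_0})$. First I would verify $W_{\kappa_0}\in\mathcal F$: apply Corollary~\ref{class_F} to $W_{\kappa_0}\cong\mathcal K(l_{q'},l_1(\kappa_0,l_{p'}))$ with $W=l_{q'}$ (so $W^*=l_q$ is separable with a basis) and $V=l_{p'}$ (reflexive, so it has the Mazur property and no copy of $c_0$); the cardinality hypothesis $\abs\lambda<m_r$ is used exactly here, to guarantee that $\abs{\kappa_0}=\abs\lambda$ is non-measurable. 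Then Theorem~\ref{Bu} classifies the spaces $C(\cdot,W_{\kappa_0})$ up to isomorphism, and the canonical ordinals of Proposition~\ref{canonical_ord} were chosen precisely so that distinct canonical values give non-isomorphic $C(\cdot,W_{\kappa_0})$; checking this is a case analysis in Theorem~\ref{Bu} whose only nontrivial inputs are that $W_{\kappa_0}\sim W_{\kappa_0}\oplus W_{\kappa_0}$ (clear) and whether $W_{\kappa_0}\sim c_0(W_{\kappa_0})$. The latter holds for $\kappa_0=1$, since $W_1=\mathcal K(l_p,l_q)\sim\mathcal K(l_p,c_0(l_q))\sim c_0(W_1)$ by (2) --- which is why in Proposition~\ref{canonical_ord}(2) a countable quotient $\xi'=\omega$ collapses; and it fails for $\kappa_0$ infinite, because $c_0(l_1)=\mathcal K(c_0)$ embeds into $c_0(W_{\kappa_0})$ but not into $W_{\kappa_0}=\mathcal K(c_0(\kappa_0,l_p),l_q)$ (any separable subspace of the latter is carried by countably many coordinates, hence sits inside $\mathcal K(c_0(l_p),l_q)$, contradicting Theorem~\ref{no_c0_l1}), matching ``$\xi'\ge\omega$'' in Proposition~\ref{canonical_ord}(1). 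With these facts Theorem~\ref{Bu} forces $\psi(\lambda,\xi)=\psi(\mu,\eta)$; the finite-$\lambda$ instances may alternatively be read off from (2), as in Theorem~\ref{t2}.

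The main obstacle is the upper bound in the second step --- separating the $\abs\lambda$-part of $\iota$ from the $\abs\xi$-part, i.e.\ showing that $C(\xi,W_{\lambda_0})$ carries no $l_1(\kappa)$ beyond what $\operatorname{dens}W_{\lambda_0}$ permits. The lower bound and the reduction of the third step are comparatively routine once the structural results of the earlier sections are in hand.
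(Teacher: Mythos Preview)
Your plan for the second conclusion is essentially the paper's: identify the space with $C(\xi,W_{\kappa_0})$, put $W_{\kappa_0}$ in $\mathcal F$ via Corollary~\ref{class_F} (this is where $|\lambda|<m_r$ enters), apply Theorem~\ref{Bu}, and resolve case~(\ref{regular_ord})(a) by deciding whether $W_{\kappa_0}\sim c_0(W_{\kappa_0})$ using (2) when $\kappa_0=1$ and Theorem~\ref{no_c0_l1}/Corollary~\ref{notiso} when $\kappa_0\ge\omega$. Nothing to add there.

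For $\lambda_0=\mu_0$, two remarks. First, a slip: for finite $\lambda$ the space is not $C(\xi,l_q)$ but $C(\xi,\mathcal K(l_p,l_q))$, so the dual is not $l_1(\xi,l_{q'})$ and your Asplund line, as written, does not go through. The fix is simply to run the \emph{same} reduction you propose for the infinite case, landing in $\mathcal K(l_p,l_q)$, and then invoke Samuel's theorem that $l_1\not\hookrightarrow\mathcal K(l_p,l_q)$~\cite[Theorem~3.1]{S1}; this is exactly what the paper does.

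Second, the step you flag as the main obstacle --- showing that an $l_1(\kappa)$ inside $C(\xi,W_{\lambda_0})$ must already live in $W_{\lambda_0}$ --- is handled in the paper by a single citation rather than by the Cantor--Bendixson/$\Delta$-system argument you sketch. The paper quotes \cite[Theorem~2.3]{G2}: if $Y$ contains no copy of $c_0$ and $X\sim X\oplus X$, then $Y\hookrightarrow C(K,X)$ forces $Y\hookrightarrow X$. Applying this with $Y=l_1(\mu)$ and $X=W_{\lambda_0}$ gives $l_1(\mu)\hookrightarrow W_{\lambda_0}$ in one stroke; the finite-$\lambda$ case then contradicts Samuel, and the infinite case is finished by the density-character comparison. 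Your transfinite induction would presumably recover this, but the quoted theorem removes the obstacle entirely and makes the upper bound as routine as the lower bound.
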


\begin{proof}   Assume that ${\mathcal K}(C (\lambda,   l_{p}), C(\xi, l_{q})) $ is isomorphic to 
${\mathcal K}
(C(\mu, l_{p}),  C(\eta, l_{q}))$ for some nonzero finite ordinals  $\xi,$ $\eta$, $\lambda$ and  $\mu$. 
Without loss of generality we can assume   $|\lambda| \leq |\mu|$. Let
$p'$ be the real number satisfying  $1/p + 1/p' =1$. We know that
$C(\beta, l_{p})$ has the approximation property for every ordinal $\beta.$
So according to  \cite[Proposition
5.3]{DF} we see   that

$$ {\mathcal K}\left(C (\lambda,   l_{p}), C(\xi, l_{q})\right )            \sim             l_{1}(\lambda, l_{p'}) {\hat{\hat\otimes}}  C(\xi, l_{q}) \sim   C( \xi, l_{1}(\lambda, l_{p'}) {\hat{\hat\otimes}}  l_{q}), \eqno (3)$$ 
and 
$${\mathcal K} (C(\mu, l_{p}),  C(\eta, l_{q}))  \sim     l_{1}(\mu, l_{p'}) {\hat{\hat\otimes}}  C(\eta, l_{q})  \sim   C( \eta, l_{1}(\mu, l_{p'}) {\hat{\hat\otimes}}  l_{q})              \eqno(4)$$ 
Consequently
$$l_{1}(\mu)  \hookrightarrow  {\mathcal K}(C(\mu, l_{p}),  C(\eta, l_{q}))    \sim    C( \xi, l_{1}(\lambda, l_{p'}) {\hat{\hat\otimes}}  l_{q}).               \eqno (5)$$
Since  $l_{1}(\mu)$ contains no subspace isomorphic to  $c_{0}$ and $l_{1}(\lambda, l_{p'}) {\hat{\hat\otimes}} l_{q}$ is isomorphic to its square, we deduce by (5) and 
\cite[Theorem 2.3] {G2} that 
$$l_{1}(\mu) \hookrightarrow l_{1}(\lambda, l_{p'}) {\hat{\hat\otimes}} l_{q}. \eqno (6)$$
Therefore if $\lambda< \omega$ and $\mu \ge \omega$, then by (6) we
would conclude that   $l_{p'} {\hat{\hat\otimes}} l_{q}$ contains a copy of
$l_{1}$, which is absurd by \cite[Theorem 3.1]{S1}. Hence either $\lambda
\mu < \omega$ and $\lambda_0=\mu_0$
or $\lambda \geq \omega$ and $\mu \geq \omega$. In the
last case,  observe that the  density  character of  $l_{1}(\mu)$ is
$|\mu|$ and  the density character  of  $l_{1}(\lambda, l_{p'})
{\hat{\hat\otimes}} l_{q}$ is $|\lambda|$. Thus, it follows from
(6) that $|\mu| \leq |\lambda|$. So  $|\lambda|= |\mu|$. Thus we have that
$\lambda_0=\mu_0.$

If $\xi<\omega$ and $\lambda_0\ge \omega$, then by Corollary \ref{notiso},
$\eta<\omega$, and $\psi(\lambda,\xi)=\psi(\mu,\eta).$
If $\lambda_0=\mu_0=1$, then if $\xi$ or $\eta$ is finite, we can replace it
by $\omega,$ \cite[Theorem 5.3]{AG}. (Observe that
$\psi(m,n)=\psi(m,\omega)$ if $m,n<\omega.$) Thus we may assume for the
remaining cases that $\xi$
and $\eta$ are infinite and that $\lambda=\lambda_0=\mu.$

Next,  let $$X=l_{1}(\lambda_0, l_{p'}) {\hat{\hat\otimes}}
l_{q}=l_{1}(\lambda, l_{p'}) {\hat{\hat\otimes}}
l_{q}=l_{1}(\mu, l_{p'}) {\hat{\hat\otimes}}
l_{q}.$$
By Corollary \ref{class_F}, $X$ has the Mazur Property.
Hence by
(3) and  (4)  we infer that
$$C(\xi, X) \sim     {\mathcal K}(C (\lambda,	l_{p}), C(\xi, l_{q}))
\sim {\mathcal K} (C(\mu, l_{p}),  C(\eta, l_{q}))  \sim C(\eta,
X). \eqno(7)$$

Thus by (1) of  Theorem \ref{Bu} we conclude that  $|\xi|
= |\eta|.$ The other assertions of that theorem allow us to complete the
argument once we deal with Theorem \ref{Bu}(\ref{regular_ord})(a). 

Suppose that $|\xi|=\xi_0$ is an uncountable regular cardinal. Because
$X$ is isomorphic to its square, if the ordinal quotient $\xi'$ is finite,
$C(\xi,X)$ is isomorphic to $C(\xi_0,X).$ If
$\lambda_0=1$ and $|\xi'|=\omega$,
then
$$C(\xi_0\omega,X)\sim
C(\xi_0,C(\omega,X))\sim C(\xi_0,X).$$ However, if $|\xi'|=\omega$ and
$\lambda\ge \omega,$ then
by Corollary \ref{notiso} $C(\omega,X)$ is not isomorphic to $X$. Thus
$C(\xi_0\omega,X)$ is not isomorphic to  $C(\xi_0,X).$
Thus $\psi(\lambda,\xi)=\psi(\mu,\eta)$ in all cases.

\end{proof}

The results claimed in the abstract are immediate consequences.

\begin{theorem}  \label{iso} Suppose that  $1 <p \leq q < \infty$  and
$\xi$,  $\eta$,  $\lambda$ and  $\mu$  are infinite ordinals with 
$|\lambda|=|\mu|<m_{r}$.
Then the following statements   are equivalent

\begin{enumerate}
\item[(a)] ${\mathcal K}(C(\lambda, l_{p}), C(\xi, l_{q})) $ is isomorphic to  ${\mathcal K}(C(\mu, l_{p}), C( \eta, l_{q})) .$ 

\item  [(b)] $C(\xi)$ is  isomorphic to $C(\eta)$ or there exists a finite or an
uncountable  regular  ordinal  $\alpha$  and $ 1 \leq m, n  <
\omega$ such that $C(\xi)$ is isomorphic to $C(\alpha m)$ and $C(\eta)$
is isomorphic to $C(\alpha n)$. 
\end{enumerate}
\end{theorem}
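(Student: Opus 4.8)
The plan is to deduce Theorem \ref{iso} from Theorem \ref{Nec} together with Proposition \ref{canonical_ord}, reducing everything to the computation of the invariant $\psi$.

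First I would prove the implication (a)$\Rightarrow$(b). Assume ${\mathcal K}(C(\lambda,l_p),C(\xi,l_q))\sim {\mathcal K}(C(\mu,l_p),C(\eta,l_q))$. Since $|\lambda|=|\mu|<m_r$ and both ordinals are infinite, Theorem \ref{Nec} gives $\psi(\lambda,\xi)=\psi(\mu,\eta)$; call this common ordinal $\psi$. Now I would go through the cases in the definition of $\psi$ in Proposition \ref{canonical_ord}, keeping in mind Remark \ref{card}, which ensures $|\xi|=|\psi(\lambda,\xi)|=|\psi|=|\psi(\mu,\eta)|=|\eta|$, so $\xi$ and $\eta$ have the same cardinality $\kappa$ with initial ordinal $\alpha_0$. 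If $\alpha_0=\omega$ or $\alpha_0$ is an infinite singular ordinal (case (5)), then $\psi$ is the least ordinal with $\psi^\omega>\xi$, respectively $>\eta$; since these are equal, $\xi$ and $\eta$ lie in the same "Bessaga--Pelczynski block" and hence $C(\xi)\sim C(\eta)$ by the classification recalled in the Preliminaries. If $\alpha_0$ is uncountable regular and $\xi,\eta\le\alpha_0^2$ (cases (1),(2)), then $\psi(\lambda,\xi)\in\{\alpha_0,\alpha_0\xi_0'\}$ according as the ordinal quotient $\xi'$ is finite or infinite, and similarly for $\eta$; equality of the $\psi$ values forces either both quotients finite — in which case $C(\xi)\sim C(\alpha_0 m)$ and $C(\eta)\sim C(\alpha_0 n)$ for finite $m,n$ (the ordinals of cardinality $\alpha_0$ in $[\alpha_0,\alpha_0^2]$ with finite quotient are isomorphic to $C(\alpha_0 m)$), or both quotients infinite of the same cardinality, whence $C(\xi)\sim C(\eta)$ by Theorem \ref{Bu}(\ref{regular_ord})(b) applied with $X=\mathbb R$, i.e. the Gul'ko--Kislyakov classification. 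If $\alpha_0$ is uncountable regular and $\xi_0^2<\xi$ (case (3)), the value $\max\{\alpha_0^2,\gamma\}$ being equal again pins down the block, giving $C(\xi)\sim C(\eta)$. The finite-$\alpha$ alternative in (b) comes from case (4): if $\xi_0<\omega$ (and $\lambda_0\ge\omega$ forces $\eta$ finite too by Corollary \ref{notiso}), then $C(\xi)\sim C(\xi_0\cdot m)=C(\alpha m)$ with $\alpha=\xi_0$ finite.

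For (b)$\Rightarrow$(a): if $C(\xi)\sim C(\eta)$, then $C(\xi,X)\sim C(\xi)\hat{\hat\otimes}X\sim C(\eta)\hat{\hat\otimes}X\sim C(\eta,X)$ for $X=l_1(\lambda,l_{p'})\hat{\hat\otimes}l_q$, and since $\lambda_0=\mu_0$ (as $|\lambda|=|\mu|$), the representations (3) and (4) from the proof of Theorem \ref{Nec} give isomorphism of the two spaces of compact operators. In the other case, with $\alpha$ finite or uncountable regular and $C(\xi)\sim C(\alpha m)$, $C(\eta)\sim C(\alpha n)$: using $C(\xi,X)\sim C(\alpha m,X)$ and the fact that $X$ is isomorphic to its square (so $C(\alpha m,X)\sim C(\alpha,X)$ for finite $m$ when $\alpha$ is infinite, and when $\alpha$ is finite $C(\alpha m,l_p)$-related collapsing is handled via (2), i.e. ${\mathcal K}(l_p,l_q)\sim{\mathcal K}(l_p,c_0(l_q))$), one gets $C(\xi,X)\sim C(\alpha,X)\sim C(\eta,X)$, hence isomorphism of the compact operator spaces again via (3),(4). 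I would present the infinite-$\alpha$ subcase and the finite-$\alpha$ subcase separately, invoking Proposition \ref{canonical_ord}'s case (4) and equation (2) for the latter.

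The main obstacle I expect is bookkeeping the case (1)/(2) dichotomy in Proposition \ref{canonical_ord} cleanly: the threshold between "$\xi'<\omega$" and "$\xi'\ge\omega$" (or $>\omega$ when $\lambda<\omega$) must be matched on both sides, and one has to be careful that the hypothesis $\lambda,\mu$ infinite puts us in case (1) rather than (2), and that the real-valued measurability bound $|\lambda|<m_r$ is exactly what is needed to invoke Corollary \ref{class_F} (so $X\in\mathcal F$) and hence Theorem \ref{Nec}. The verification that "finite ordinal quotient in $[\alpha,\alpha^2]$" corresponds precisely to "$C(\xi)\sim C(\alpha m)$ for some finite $m$" is routine from the Gul'ko--Kislyakov criterion but should be stated explicitly. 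Everything else is assembling already-proved pieces.
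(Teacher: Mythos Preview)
Your approach is exactly the one the paper intends: the paper gives no explicit proof of Theorem \ref{iso} beyond the sentence ``The results claimed in the abstract are immediate consequences,'' meaning it follows from Theorem \ref{Nec} and Proposition \ref{canonical_ord}. Your case-by-case unpacking of $\psi$ is the natural way to make this precise, and the (b)$\Rightarrow$(a) direction via (3), (4), and $X\sim X\oplus X$ is correct.

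One small correction: the finite-$\alpha$ alternative in (b) is actually vacuous under the hypotheses of Theorem \ref{iso}, since $\xi$ and $\eta$ are assumed infinite, so $\xi_0\ge\omega$ and case (4) of Proposition \ref{canonical_ord} never applies (its first clause needs $\xi_0<\omega$, its second needs $\lambda<\omega$). You therefore do not need the discussion invoking Corollary \ref{notiso} for that sub-case; the clause ``finite or'' in (b) appears to be harmless redundancy in the statement. With $\lambda,\mu\ge\omega$ you are always in cases (1), (3), or (5), and your analysis of those is sound. You might also want to note explicitly that the boundary value $\psi=\xi_0^2$ can arise from both case (1) (when $\xi'=\xi_0$) and case (3) (when $\xi_0^2<\eta<(\xi_0^2)^\omega$), and that in this overlap one still gets $C(\xi)\sim C(\eta)$ by the Bessaga--Pe\l czy\'nski/Gul'ko--Kislyakov criterion.
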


Finally, the next theorem completes the isomorphic classification, up to isomorphism, of the spaces of compact operators on $C(\xi, l_{p})$ spaces, with $1<p< \infty$.

\begin{theorem} \label{t2}Suppose that  $1 <p \leq q < \infty$  and
$\xi$,  $\eta$,  $\lambda$ and  $\mu$  are ordinals with
$\omega \le \xi \leq \eta$, $|\xi|=|\eta|$ and $\lambda   \mu < \omega$. Then the
following statements  are equivalent

\begin{enumerate}
\item[(a)] ${\mathcal K}(C(\lambda, l_{p}), C(\xi, l_{q})) $ is isomorphic to  ${\mathcal K}(C(\mu, l_{p}), C(\eta, l_{q})) .$ 

\item  [(b)] $C(\xi)$ is  isomorphic to $C(\eta)$ or there exists an uncountable  initial regular  ordinal  $\alpha$  and $ 1 \leq m, n  \leq  \omega$ such that $C( \xi)$ is isomorphic to $C(\alpha m)$ and $C(\eta)$ is isomorphic to $C(\alpha n)$.
\end{enumerate}
\end{theorem}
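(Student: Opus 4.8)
\textbf{Proof proposal for Theorem \ref{t2}.}

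The plan is to reduce everything to the case $\lambda=\mu=1$ and then invoke Theorem \ref{Nec} together with the extra collapsing phenomenon recorded in equation (2) (i.e., $\mathcal K(l_p,l_q)\sim \mathcal K(l_p,c_0(l_q))$) and in the remarks preceding Proposition \ref{canonical_ord}. Since $\lambda\mu<\omega$, both $\lambda$ and $\mu$ are finite, so $\lambda_0=\mu_0=1$ and, by the representation discussion in Section 2, $\mathcal K(C(\lambda,l_p),C(\xi,l_q))\sim \mathcal K(l_p,C(\xi,l_q))$ and likewise for $\mu,\eta$. Thus (a) is equivalent to $\mathcal K(l_p,C(\xi,l_q))\sim \mathcal K(l_p,C(\eta,l_q))$, and I only need to classify these spaces for infinite $\xi\le\eta$ with $|\xi|=|\eta|$.

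For the implication (b) $\Rightarrow$ (a): if $C(\xi)\sim C(\eta)$ then $C(\xi,X)\sim C(\eta,X)$ for any $X$ (tensoring), so in particular for $X=l_{p'}\hat{\hat\otimes}l_q$, giving (a) via (3),(4). In the remaining case we have an uncountable regular $\alpha$ and $1\le m,n\le\omega$ with $C(\xi)\sim C(\alpha m)$, $C(\eta)\sim C(\alpha n)$. First, when $m,n$ are finite, $C(\alpha m,X)\sim C(\alpha,X)$ because $X=l_{p'}\hat{\hat\otimes}l_q$ is isomorphic to its square (the standard fact $X\sim X\oplus X\Rightarrow C(\alpha n,X)\sim C(\alpha,X)$ quoted in Section 2), so both sides of (a) are isomorphic to $\mathcal K(l_p,C(\alpha,l_q))$. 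The genuinely new point is $m=\omega$ or $n=\omega$: here I use $\mathcal K(l_p,C(\omega,l_q))\sim \mathcal K(l_p,l_q)$ from (2), hence $C(\alpha\omega,X)\sim C(\alpha,C(\omega,X))\sim C(\alpha, l_{p'}\hat{\hat\otimes}C(\omega,l_q)\,)\sim C(\alpha,l_{p'}\hat{\hat\otimes}l_q)\sim C(\alpha,X)$; more generally $C(\alpha m,X)\sim C(\alpha,X)$ for every $1\le m\le\omega$. So every combination of $m,n$ in $[1,\omega]$ yields $\mathcal K(l_p,C(\xi,l_q))\sim \mathcal K(l_p,C(\alpha,l_q))\sim \mathcal K(l_p,C(\eta,l_q))$, proving (a).

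For (a) $\Rightarrow$ (b): apply Theorem \ref{Nec} with $\lambda_0=\mu_0=1$. It gives $\psi(\lambda,\xi)=\psi(\mu,\eta)$. Now unwind the definition of $\psi$ from Proposition \ref{canonical_ord} in the case $\lambda,\mu<\omega$. If $\xi_0$ (the initial ordinal of $|\xi|$) is not an uncountable regular ordinal, or if $\xi_0$ is uncountable regular but $\xi\ge\xi_0^2$, then $\psi(\lambda,\xi)$ is determined by the Bessaga--Pe{\l}czy\'nski/Gul'ko--Kisljakov invariants in exactly the way that makes $\psi(\lambda,\xi)=\psi(\mu,\eta)$ equivalent to $C(\xi)\sim C(\eta)$; this is the first alternative in (b). The remaining case is $\xi_0$ uncountable regular and $\alpha\le\xi<\alpha^2$ (write $\alpha=\xi_0$); then $\xi=\alpha\xi'+\delta$ with $\xi'\le\alpha$, $\delta<\alpha$, and Proposition \ref{canonical_ord}(2) says $\psi(\lambda,\xi)=\alpha$ if $\xi'\le\omega$ and $\psi(\lambda,\xi)=\alpha\xi_0'$ if $\xi'>\omega$. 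Since also $|\xi|=|\eta|$ forces $\eta_0=\alpha$ and (by the cardinality part of Theorem \ref{Nec}, or directly) $\eta$ lies in the same interval, equality $\psi(\lambda,\xi)=\psi(\mu,\eta)$ splits into two subcases: either both $\xi',\eta'\le\omega$, in which case $C(\xi)\sim C(\alpha m)$ and $C(\eta)\sim C(\alpha n)$ for suitable $1\le m,n\le\omega$ (the ordinal quotient being finite gives $m,n<\omega$ via the $C(\xi)$-classification, and quotient $\omega$ gives $m=\omega$, using $C(\alpha\omega)\sim C(\alpha\cdot\omega)$ only after the operator-space collapse, which is why $m=\omega$ is now allowed), yielding the second alternative of (b); or both $\xi',\eta'>\omega$, in which case $|\xi'|=|\eta'|=\alpha$ and $C(\xi)\sim C(\eta)$ by Theorem \ref{Bu}(\ref{regular_ord})(b), the first alternative. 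The main obstacle, and the place requiring care, is the bookkeeping in this last case: one must check that the value $\psi(\lambda,\xi)=\alpha$ genuinely absorbs both the finite-quotient ordinals $C(\alpha m)$, $m<\omega$, and the quotient-$\omega$ ordinal $C(\alpha\omega)$ into a single isomorphism class of $\mathcal K(l_p,C(\,\cdot\,,l_q))$, while keeping these distinct from the quotient-$>\omega$ class — i.e., that the collapse from (2) is exactly measured by the clause ``$1\le m,n\le\omega$'' in (b) and by the definition of $\psi$, with no further identifications. This is precisely the content that Proposition \ref{canonical_ord}(2) and (4) were set up to encode, so the verification is a matter of matching cases rather than new ideas.
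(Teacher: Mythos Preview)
Your proposal is correct and follows exactly the route the paper intends: since the paper gives no separate proof of Theorem \ref{t2}, it is meant to be read off from Theorem \ref{Nec} (here $|\lambda|<\omega<m_r$ is automatic) together with the case analysis encoded in Proposition \ref{canonical_ord}, and that is precisely what you do.

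One small slip to fix: in the subcase $\xi',\eta'>\omega$ you write ``$|\xi'|=|\eta'|=\alpha$'', but equality with $\alpha$ need not hold; what $\psi(\lambda,\xi)=\psi(\mu,\eta)$ actually gives is $\xi'_0=\eta'_0$, i.e.\ $|\xi'|=|\eta'|$, and that alone already yields $C(\xi)\sim C(\eta)$ by the Gul'ko--Os'kin/Kislyakov classification of $C(\gamma)$ spaces recalled in Section~2 (not by Theorem \ref{Bu}, which concerns $C(\gamma,X)$).
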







In view of Theorems \ref{iso} and \ref{t2}, the following question arises naturally.
\begin{problem}  Does the above  isomorphic classification of the spaces of compact operator  ${\mathcal K}(C (\lambda,   l_{p}), C(\xi, l_{q}))$, with $\lambda \geq \omega$,  remain true in ZFC?
\end{problem}

\bibliographystyle{amsplain}

\end{document}